\theoremstyle{plain}
\newtheorem*{thm*}{Theorem}
\newtheorem{prop}{Proposition}[section]
\newtheorem*{cor*}{Corollary}
\newtheorem{defi}{Definition}[section]
\newcommand {\R} {\mathbb{R}} \newcommand {\Z} {\mathbb{Z}}
\newcommand {\T} {\mathbb{T}} 
\newcommand {\p} {\partial}
\newcommand {\dt} {\partial_t}
\newcommand{\bbT}{\mathbb{T}}
\newcommand{\bbZ}{\mathbb{Z}}
\newcommand{\calF}{\mathcal{F}}
\newcommand{\calR}{\mathcal{R}}
\newcommand{\eps}{\varepsilon}
\DeclareMathOperator{\supp }{supp}
\theoremstyle{plain}
\newtheorem{theorem}{Theorem}
\newtheorem{lemma}[theorem]{Lemma}
\newtheorem{pro}[theorem]{Proposition}
\theoremstyle{definition}
\date{\today}
\begin{document}

\title[MHD Stability Threshold]{On the Sobolev Stability Threshold for the 2D MHD Equations with
  Horizontal Magnetic Dissipation}

\begin{abstract}
  In this article we consider the stability threshold of the 2D
  magnetohydrodynamics (MHD) equations near a combination of Couette flow and
  large constant magnetic field.
  We study the partial dissipation regime with full viscous and only horizontal
  magnetic dissipation. In particular, we show that this regime behaves qualitatively differently than both the fully
  dissipative and the non-resistive setting.
\end{abstract}
\author{Niklas Knobel}
\address{Karlsruhe Institute of Technology, Englerstraße 2,
  76131 Karlsruhe, Germany}
  \email{niklas.knobel@kit.edu}
\author{Christian Zillinger}
\address{Karlsruhe Institute of Technology, Englerstraße 2,
  76131 Karlsruhe, Germany}
\email{christian.zillinger@kit.edu}

\keywords{Magnetohydrodynamics, partial dissipation, stability threshold}
\subjclass[2020]{76E25, 76E30, 76E05}

\maketitle

\setcounter{tocdepth}{1}
\tableofcontents

\section{Introduction }

The equations of magnetohydrodynamics (MHD)
\begin{align}
  \label{aniso}
  \begin{split}
    \partial_t V + V\cdot \nabla V+ \nabla \Pi  &= (\nu_x\partial_x^2+\nu_y\partial_y^2) V + B\cdot\nabla B, \\
    \partial_t B + V\cdot\nabla B &= (\kappa_x\partial_x^2+\kappa _y\partial_y^2)  B +B\cdot\nabla V, \\
    \nabla\cdot v=\nabla\cdot b  &= 0,\\
    (t,x,y) &\in \R^+ \times\bbT\times \R,
  \end{split}
\end{align}
model the evolution of the velocity $V$ of conducting, non-magnetic fluids
interacting with a magnetic field $B$. The MHD equations are commonly used in applications ranging from
astrophysics and the description of plasmas to control problems for liquid
metals in industrial applications \cite{davidson_2016}.
Similarly to the Navier-Stokes and Euler equations, questions of hydrodynamic
stability and the behavior for high Reynolds numbers (that is, for $\nu,\kappa$ tending to zero) are a very active area of
research both inner-mathematically and in view of applications.

Motivated by stability results for the isotropic full-dissipation case ($\nu_x=\nu_y=\kappa_x=\kappa_y>0$) and
instability results for the non-resistive case ($\kappa_x=\kappa_y=0$), we are
interested in the behavior of the two-dimensional magnetohydrodynamic (MHD)
equations with partial dissipation, where some of the dissipation coefficients
\begin{align*}
  \kappa_y, \kappa_x, \nu_x, \nu_y\ge 0,
\end{align*}
are allowed to vanish.
More specifically, we study the behavior near the stationary solution given by the combination of Couette flow and a
(large) constant magnetic field 
\begin{align}
  \label{eq:Couette}
    V_s=ye_1, \quad B_s=\alpha e_1,
\end{align}
for the case of vanishing vertical resistivity, $\kappa_y=0$.
For the related case of the Navier-Stokes equations (that is, without any
magnetic field) the (in)stability of Couette flow at high Reynolds number is
known as the Sommerfeld paradox \cite{Maj} and is related to nonlinear
instability of the Euler equations
\cite{bedrossian2015inviscid,dengmasmoudi2018,dengZ2019}.

However, for the case of sufficiently small data it was proven in
\cite{bedrossian2016sobolev} that (mixing enhanced) dissipation can counteract
this instability  in the Navier-Stokes equations and that (long time asymptotic) stability holds in Sobolev
spaces for initial data with
\begin{align*}
  \|\omega\|_{H^N}\leq \epsilon \ll \nu^{\gamma}
\end{align*}
with $\gamma\geq \frac{1}{2}$. Later in \cite{masmoudi2022stability} this has
been improved to $\gamma =\tfrac 13$.
This is an example of a stability threshold result, which establishes stability
for small data and determines suitable (optimal) exponents $\gamma$ for given norms.

Since the addition of the magnetic field is known to possibly destabilize the
dynamics (see the following discussion), our main questions concern the MHD
equations \eqref{aniso} in terms of perturbations moving with the underlying
shear flow:
\begin{align*}
    v(x,y,t)&= V(x-yt,y,t )- V_s, \\
    b(x,y,t)&= B(x-yt,y,t )- B_s.
\end{align*}
The corresponding perturbed equations in these new variables read
\begin{align}
\begin{split}
    \partial_t v + v_2 e_1 - 2\partial_x \Delta^{-1}_t  \nabla_t v_2  &=  \nu \cdot \Delta_t v+ \alpha \partial_x b  + b\nabla_t b- v\nabla_t v-\nabla_t \pi , \\
    \partial_t b - b_2 e_1 \qquad \qquad \quad \quad  \ &= \kappa\cdot \Delta_t   b+ \alpha \partial_x v  +b\nabla_t v -v\nabla_t b,\\
    \nabla_t\cdot v=\nabla_t\cdot b &= 0.\label{anco}
\end{split}
\end{align}
Here, we introduce the time-dependent derivatives $\partial_y^t = \partial_y-t\partial_x $, $\nabla_t = (\partial_x , \partial_y^t)$ and $\Delta _t = \partial_x^2+  (\partial_y^t)^2$.
Furthermore, we use the following short notation for the dissipation operator: 
\begin{align*}
    \nu \cdot \Delta_t &=\nu_x\partial_x^2+\nu_y(\partial_y^t)^2 ,\\
    \kappa \cdot \Delta_t &=\kappa_x\partial_x^2+\kappa_y(\partial_y^t)^2.
\end{align*}

In this article we aim to establish a Sobolev stability threshold for \eqref{anco} for the specific
anisotropic, partial dissipation case
\begin{align*}
  \kappa_y=0, \  \kappa_x = \nu_x =\nu_y>0.
\end{align*}
In particular, we show that this setting exhibits qualitatively different behavior than the fully dissipative and the non-resistive case. 

Following a similar notation as \cite{liss2020sobolev} we make the following definition.
\begin{defi}[Stability threshold]
  Consider the MHD equations \eqref{aniso} with anisotropic dissipation
  $0< \nu_x=\nu_y=\kappa_x=: \mu \ll 1$ and $\kappa_y=0$ and let $X$ be a Banach space with norm
  $\|(v,b)\|_{X}$.
  We then say that the exponent $\gamma=\gamma(X)$ is a stability threshold for
  the space $X$ if for initial data with
  \begin{align*}
    \Vert (v_{in},b_{in}) \Vert_X & \leq \epsilon \ll \mu^\gamma,
  \end{align*}
  the corresponding solution of \eqref{anco} remains uniformly bounded for all future times with
  a quantitative control
  \begin{align*}
    \sup_{t>0} \Vert (v,b) \Vert_X & \lesssim \epsilon.
  \end{align*}
\end{defi}
We remark that this definition does not require optimality (that is, instability
for smaller choices of $\gamma$). Optimal stability thresholds quantify the appearance of instability in the large
Reynolds number limit and are an active area of research for many fluid systems.
In view of the large literature, the interested reader is referred to the
following articles for the Navier-Stokes equations
\cite{bedrossian2016sobolev,bedrossian2017stability} and the Boussinesq
equations \cite{zhai2022stability,lai2021optimal,tao2020stability} for a discussion and further references.

For the (isotropic) MHD equations ($\nu:= \nu_x=\nu_y$ and $\kappa:= \kappa_x =\kappa_y$), there exists several results for non-vanishing magnetic dissipation. 
\begin{itemize}
    \item When considering full isotropic dissipation $\nu =\kappa>0$, Liss
      \cite{liss2020sobolev} established a Sobolev threshold in the 3D case.
      Under a Diophantine condition on the magnetic field, he establishes
      stability for $\|(v,b)\|_{H^N}$ with $\gamma= 1$.
      For the 2D case an improvement to $\gamma = \tfrac{2}{3}$ is expected
      due to the lack of lift-up instability.
      Indeed, in a very recent paper, \cite{Dolce}, Dolce establishes such a
      threshold for the regime $0<C\kappa^3 \leq \nu \leq \kappa$. 
    \item In the 2D inviscid case with isotropic magnetic dissipation, $\nu=0$ and
      $\kappa>0$, in \cite{knobel2023echoes} the authors established linear
      instability of nearby (in analytic regularity) so-called traveling wave
      type solutions in Gevrey $2$ regularity.
      As an (almost) matching nonlinear result, \cite{zhao2023asymptotic}
      established a stability threshold $\gamma\geq 1$ for Gevrey $2-\delta$
      regularity for any $0<\delta<1$.
    \item The setting with only an underlying magnetic field but without shear
      flow exhibits qualitatively different behavior and was studied for the
      case of the whole space in \cite{bardos1988longtime,ren2014global} in the
      full dissipation case and in \cite{cao20132d,ji2019stability} for the partially dissipative case.  
\end{itemize}
To the authors' knowledge there are no such results in the literature for the
non-resistive case $\kappa=0$ with Couette flow, both for the viscous or inviscid regime $\nu=0$ or
$\nu>0$, and neither for partial dissipation regimes.
In view of linear instability results \cite{hussain2018instability} (see also Proposition
\ref{prop:instability}), for these equations any stability threshold
results would need to consider unknowns different from $(v,b)$.

As a step towards understanding this non-resistive regime, in this article we
consider the 2D MHD equations with isotropic viscosity but only horizontal
resistivity (while \cite{liss2020sobolev, Dolce} consider full dissipation).
In particular, we ask to which extent, as quantified by Sobolev stability
thresholds, this partial dissipation regime behaves or does not behave like
these extremal cases.

In the (ideal) MHD equations ($\nu=\kappa=0$) the interaction of shear flows and the magnetic field has been shown to
possibly cause instabilities, with arguments both on physical
\cite{chen1991sufficient,hirota2005resonance} and mathematical grounds \cite{hughes2001instability, zhai2021long}. 

As our first result, we show that this instability also persists in the viscous
but non-resistive MHD. These equations exhibit norm inflation in $H^N$ for all
choices of $\nu>0$. 
\begin{pro}[Instability for the non-resistive MHD equations]
  \label{prop:instability}
  Consider the isotropic equation with $\nu>0$ and $\kappa=0$ and $N\ge 3$, then the stationary solution
  \eqref{eq:Couette} is linearly unstable in $H^N$.
  More precisely, there exists initial data $(v,b)_{in}\in H^N$ such that the
  solution to the linearized problem satisfies  
    \begin{align*}
        \Vert (v,b)\Vert_{H^N}\approx \tfrac \nu {\alpha^2} t\Vert (v,b)_{in} \Vert_{H^N}
    \end{align*}
    as $t\to \infty$.

    As a consequence, the nonlinear equations also exhibit arbitrarily large
    norm inflation in $H^N$. That is, for any $C=C(\nu) >0$ there exists an $\eps_0>0$
    such that for all $\eps<\eps_0$ there exists initial data $(v,b)_{in}$ and a
    time $T$ such that 
    \begin{align*}
        \Vert (v,b)_{in}  \Vert_{H^N} &=\eps ,\\
        \Vert (v,b)|_{t=T} \Vert_{H^N}&\ge C \Vert (v,b)_{in}  \Vert_{H^N}.
    \end{align*}
    In particular, there cannot exist a Sobolev threshold for
    $\|(v,b)\|_{H^N}$. 
\end{pro}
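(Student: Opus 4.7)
The plan is to Fourier-reduce the linear problem to a family of non-autonomous $2\times 2$ ODEs indexed by $(k,\eta)$, exhibit an explicit mode with linear-in-$t$ growth, and then obtain the nonlinear norm-inflation statement via a standard short-time Duhamel bootstrap.

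First I would write \eqref{anco} in the moving frame with $\kappa=0$, Fourier transform in $(x,y)$, and pass to the vorticity--current variables $(\hat\omega,\hat j)(k,\eta,t)$. A curl computation in which the magnetic field-line stretching contribution is expressed via the divergence-free condition $k\hat b_1+(\eta-kt)\hat b_2=0$ gives, for $k\neq 0$,
\begin{align*}
\dot{\hat\omega} &= -\nu\,p(t)\,\hat\omega+ik\alpha\,\hat j,\\
\dot{\hat j} &= ik\alpha\,\hat\omega+\frac{\dot p(t)}{p(t)}\,\hat j,
\end{align*}
with $p(t)=k^2+(\eta-kt)^2$. The crucial structural feature is that $\hat j$ has no direct dissipation (because $\kappa_x=\kappa_y=0$) yet is driven by the factor $\dot p/p$ encoding shear-induced stretching.

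Next I would fix a favourable mode, e.g.\ $\eta=0$ and $k\neq 0$, and analyse the ODE in two regimes. In the short-time Alfvén regime, where $\nu p(t)\ll |k\alpha|$, the system oscillates at frequency $|k\alpha|$ with only slow viscous loss. In the long-time quasi-equilibrium regime, where $\nu p(t)\gg |k\alpha|$, the vorticity becomes slaved to the current, $\hat\omega\approx ik\alpha\hat j/(\nu p(t))$, and the $\hat j$ equation reduces to the scalar ODE
\[
\dot{\hat j}\approx \left(\frac{\dot p}{p}-\frac{k^2\alpha^2}{\nu p(t)}\right)\hat j,\qquad
\hat j(t)\propto p(t)\exp\!\left(-\frac{k^2\alpha^2}{\nu}\int^t\frac{ds}{p(s)}\right).
\]
Since $\int^\infty ds/p<\infty$ and $p(t)\sim k^2t^2$, this yields $|\hat b(t)|=|\hat j(t)|/\sqrt{p(t)}\sim t$. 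Because the moving-frame $H^N$-norm of a single-mode perturbation is $|\hat b(t)|$ (resp.\ $|\hat v(t)|$) times a $t$-independent weight, matching constants across the two regimes produces the asymptotic $\|(v,b)(t)\|_{H^N}\approx(\nu/\alpha^2)\,t\,\|(v,b)_{in}\|_{H^N}$.

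For the nonlinear statement, given $C>0$ I would take $u_{in}=\epsilon u_0$ with $u_0$ the normalised growing mode from the linear analysis and choose $T$ so that $\nu T/\alpha^2\ge 2C$. Using the Duhamel formula $u=e^{tL}u_{in}+\int_0^t e^{(t-s)L}Q(u,u)\,ds$, where $Q$ denotes the quadratic nonlinearity of \eqref{anco}, together with the product estimate $\|Q(u,u)\|_{H^{N-1}}\lesssim \|u\|_{H^N}^2$ (valid for $N\ge 3$ via the Sobolev embedding $H^{N-1}\hookrightarrow L^\infty$), a standard continuity/bootstrap argument shows that for $\epsilon\le\epsilon_0(C,T,\nu,\alpha)$ the nonlinear remainder on $[0,T]$ is dominated by the linear part; hence $\|u(T)\|_{H^N}\ge C\,\|u_{in}\|_{H^N}$. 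The hard part is pinning down the precise coefficient $\nu/\alpha^2$: this requires uniform asymptotics through the Alfvén-to-quasi-equilibrium transition and an optimal choice of $(k,\eta)$ and initial data, whereas the nonlinear step is essentially routine once the linear statement is in hand.
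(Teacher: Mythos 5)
Your overall architecture matches the paper's: reduce to a $2\times 2$ non\-/autonomous ODE per Fourier mode, exploit that the undissipated current is driven by the stretching factor $\dot p/p$ while the vorticity is slaved by viscosity, and then run a Duhamel/bootstrap argument for the nonlinear norm inflation (the paper does exactly this last step, measuring the deviation from the linear solution in $H^{N-1}$ because the nonlinearity loses a derivative). The gap is in the mode selection for the linear step. With your choice $\eta=0$ the mode passes through the full Alfv\'en regime $|t|\lesssim (\alpha/\nu)^{1/2}$, during which the coupling at rate $\alpha k$ averages the dissipation over both components, so the pair $(\hat\omega,\hat\jmath)$ decays like $\exp\bigl(-\tfrac{\nu}{2}\int_0^{t}p\bigr)\sim \exp(-c\,\alpha^{3/2}\nu^{-1/2})$ before any growth sets in; moreover your own slaved-regime formula carries the factor $\exp\bigl(-\tfrac{k^2\alpha^2}{\nu}\int^\infty \tfrac{ds}{p}\bigr)=e^{-\pi\alpha^2/(2\nu k^2)}$ for $\eta=0$, since $\int_0^\infty \tfrac{ds}{k^2(1+s^2)}=\tfrac{\pi}{2k^2}$. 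So that mode gives growth of size $e^{-c\alpha^2/\nu}\,t$, not $\tfrac{\nu}{\alpha^2}t$ with uniform constants; this is not a matter of ``matching constants across the transition'' but an exponentially wrong prefactor.

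The missing idea — and the crux of the paper's proof — is to choose the frequency so that there is \emph{no} Alfv\'en phase at all: take $k=-1$ and $\xi\in[2\alpha^2/\nu,\,4\alpha^2/\nu]$, so that the Orr time lies in the past, $p(t)=1+(\xi+t)^2$ is already of size $\alpha^4/\nu^2$ at $t=0$, and the vorticity is overdamped from the start. Then $\tfrac{\alpha^2}{\nu}\int_0^\infty\tfrac{ds}{p(s)}\lesssim \tfrac{\alpha^2}{\nu\xi}\le\tfrac12$, all exponential factors are $O(1)$, and the current grows like $\sqrt{p(t)/p(0)}\approx (t+\xi)/\xi\approx \tfrac{\nu}{2\alpha^2}t$, which is precisely where the rate $\nu/\alpha^2$ comes from (the paper implements this as a two-sided fixed-point estimate $|p_2-\sqrt{\tfrac{1+(t+\xi)^2}{1+\xi^2}}\,p_{2,in}|\le\tfrac{\alpha^2}{\nu\xi}\|p_2\|_\infty\le\tfrac12\|p_2\|_\infty$ rather than a formal slaving ansatz). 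Your quasi-equilibrium heuristic actually points directly at this choice — one needs $\tfrac{\alpha^2}{\nu}\int\tfrac{ds}{p}=O(1)$ — but as written the proposal commits to a mode for which the claimed asymptotic fails.
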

We remark that following the same argument also instability in suitable Gevrey spaces can be established.

As mentioned above, the isotropic fully dissipative case is known to be stable
in Sobolev regularity \cite{liss2020sobolev,Dolce}.
For the associated partial dissipation regimes, in view of the underlying shear
dynamics the associated vertical dissipation case is expected to behave
similarly as the full dissipation case.
The effects of partial dissipation are a very actively studied field of research in other fluid systems, such as the Boussinesq
equations \cite{deng2020stability, cao2013global,adhikari2022stability}), but, to the authors' knowledge, is largely open in the MHD equations near Couette flow.

In the present case of horizontal resistivity, $\kappa_y=0$ and $\nu_x=\nu_y = \kappa_x$,
the lack of vertical dissipation leads to stronger instabilities, requiring
finer control and use of the coupling by a strong magnetic field. Our main
results are summarized in the following theorem.
\begin{theorem}
  \label{thm:anisoThres}
Consider the MHD equations with horizontal resistivity,
$\mu:=\nu_x=\nu_y=\kappa_x>0$ and $\kappa_y=0$, near the stationary solution
\eqref{eq:Couette} with $\alpha >\tfrac 1 2  $ and let $N \ge 6$ be given.

Then there exist constants $c_0=c(\alpha ) >0$, such that for all initial data $(v,b) _{in }$ which satisfy 
    \begin{align*}
    \Vert   (v,b)_{in}  \Vert_{ H^N }=   \eps \le c_0  \mu^{\frac 32 }
\end{align*}
the corresponding solution $(v,b)$ of \eqref{eq_p} satisfies the estimates 
\begin{align*}
     \Vert  v \Vert_{L^\infty H^N }+\mu^{\frac 1 2 } \Vert  \nabla_t  v \Vert_{L^2 H^N } &\lesssim \eps, \\
     \Vert   b \Vert_{L^\infty H^N }+\mu^{\frac 1 2 } \Vert   \partial_x  b \Vert_{L^2H^N }&\lesssim \eps. 
\end{align*}
\end{theorem}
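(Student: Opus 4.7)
The plan is to prove Theorem \ref{thm:anisoThres} by a continuity/bootstrap argument built around a combination of $H^N$ energies and dissipation integrals tailored to the anisotropy. Setting
\[
E(t) := \|v(t)\|_{H^N}^2 + \|b(t)\|_{H^N}^2, \qquad D(t) := \mu\|\nabla_t v\|_{H^N}^2 + \mu\|\partial_x b\|_{H^N}^2,
\]
I would assume on a maximal interval $[0,T^\ast)$ the bootstrap bound $\sup_{[0,t]} E + \int_0^t D \le 4\epsilon^2$ and aim to improve it to $2\epsilon^2$ under the hypothesis $\epsilon \le c_0\mu^{3/2}$; local well-posedness and continuity then force $T^\ast = +\infty$. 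The baseline $H^N$ pairing on \eqref{anco} exploits two structural facts: the Alfv\'en wave coupling $\alpha\partial_x b$ in the $v$-equation and $\alpha\partial_x v$ in the $b$-equation is antisymmetric in the combined $L^2$-inner product and produces no net energy growth, and the pressure contribution $-2\partial_x\Delta_t^{-1}\nabla_t v_2$ is an Orr-type Fourier multiplier whose time-decay, combined with the full viscous dissipation on $v$, gives an $L^1_t$-bounded contribution from the stretching.

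The central obstacle is that $D$ does not control $\partial_y^t b$, so the stretching $-b_2 e_1$ in the $b$-equation has no direct damping, and the naive $H^N$ estimate on $b$ does not close. I would compensate by introducing a cross-corrector of Elsässer/Alfv\'en type, schematically
\[
E_{\mathrm{cross}}(t) \sim \alpha^{-1}\sum_{|\beta|\le N}\mathrm{Re}\int \partial^\beta v \cdot \partial^\beta \bigl(\partial_x^{-1}\partial_y^t b\bigr),
\]
restricted to non-zero $x$-frequencies, with the $x$-independent modes of $b$ (for which $\partial_x^{-1}$ is not defined) treated separately via the reduced one-dimensional dynamics. Differentiating $E_{\mathrm{cross}}$ in time and using the $\alpha\partial_x$ coupling in both evolutions yields a coercive contribution $+\|\partial_y^t b\|_{H^N}^2$ modulo terms absorbable by $D$ and lower-order cross-pieces. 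The assumption $\alpha>\tfrac12$ is used to ensure both smallness of the corrector relative to $E$ and coercivity of the resulting bilinear form, giving an enhanced dissipation $\tilde D \sim D + \alpha^{-1}\|\partial_y^t b\|_{H^N}^2$ that acts as a substitute for the missing vertical resistivity.

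With the enhanced dissipation in hand, the nonlinear terms $v\cdot\nabla_t v$, $v\cdot\nabla_t b$, $b\cdot\nabla_t b$, $b\cdot\nabla_t v$ are controlled by product estimates in $H^N$; the requirement $N\ge 6$ ensures that a Sobolev embedding puts the lowest-derivative factor in $L^\infty$, bounded directly by $\sqrt{E}$. The worst contributions involve $v_2\,\partial_y^t b$ paired with $\partial^{2N}b$, which is precisely where the non-directly-dissipated component of $b$ appears. Placing $v_2$ in $L^\infty_{t,x,y}$ from the bootstrap, paying one power $\mu^{-1/2}$ to integrate the $v$-dissipation by Cauchy--Schwarz, and a further $\mu^{-1/2}$ from the enhanced dissipation on $\partial_y^t b$, produces a nonlinear contribution of order $\epsilon^3\mu^{-3/2}$; this closes the bootstrap precisely under $\epsilon \le c_0\mu^{3/2}$, which is the origin of the $\gamma=3/2$ threshold. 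The main obstacle I anticipate is the construction and coercivity of $E_{\mathrm{cross}}$, together with the commutator estimates between $\partial_x^{-1}$, $\partial_y^t$, and $\Delta_t$ in $H^N$, and the delicate separate treatment of the $x$-zero mode of $b$, where the Alfv\'en coupling degenerates and only the shear-induced structure of the stretching $b_2e_1$ can be used.
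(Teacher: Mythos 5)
Your bootstrap skeleton, the role of $\alpha>\tfrac12$, and the final arithmetic $\epsilon^3\mu^{-3/2}$ from the $v_2\,\partial_y^t b$ interaction all match the paper, but the central mechanism you propose does not work, and it is exactly the point where this partial-dissipation problem differs from the fully dissipative one. You claim that a cross-corrector of the type $\alpha^{-1}\Re\int \partial^\beta v\,\partial^\beta(\partial_x^{-1}\partial_y^t b)$ yields a coercive contribution $+\|\partial_y^t b\|_{H^N}^2$ modulo terms absorbable by $D$, i.e.\ that the Alfv\'en coupling lets you recover the missing vertical resistivity. On the Fourier side this is a hypocoercivity transfer for the $2\times2$ system with dissipation coefficients $\nu(k^2+(\xi-kt)^2)$ and $\mu k^2$: the error terms produced by differentiating the corrector carry the factor $\nu(k^2+(\xi-kt)^2)/(\alpha|k|)$, and the corrector itself is only dominated by the energy as long as this factor is $O(1)$. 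Since $(\xi-kt)^2$ grows like $t^2$, the transfer degenerates for $|t-\xi/k|\gtrsim(\alpha/\nu)^{1/2}$; in that overdamped regime the linearized system has eigenvalues $\lambda_1\approx-\mu k^2\langle t\rangle^2$ and $\lambda_2\approx-\mu k^2$ (this is computed explicitly in Section \ref{linstab}), so the magnetic component genuinely decays only at the horizontal rate and no uniform-in-time bound $\int\|\partial_y^t b\|_{H^N}^2\lesssim\epsilon^2$ (or any $\mu$-independent multiple of it) can hold: already for linear data at $k=\pm1$ one has $\|\partial_y^t b(t)\|\sim t\,e^{-c\mu t}\|b_{in}\|$, whose square is only integrable at cost $\mu^{-3}$. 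So the "enhanced dissipation $\tilde D$" you build the nonlinear estimates on is not available, and the step "a further $\mu^{-1/2}$ from the enhanced dissipation on $\partial_y^t b$" has no justification.

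The paper closes the argument differently: the magnetic coupling is used only to neutralize the linear stretching terms (rewriting $|p_1|^2-|p_2|^2$ through $\partial_t(p_1\bar p_2)$, which is where $\alpha>\tfrac12$ enters), while the dangerous nonlinearity $v\cdot\nabla_t b$ is treated with no vertical dissipation at all. Instead one works with $p_1=\Lambda_t^{-1}\nabla_t^\perp\cdot v$, $p_2=\Lambda_t^{-1}\nabla_t^\perp\cdot b$, decomposes the trilinear term in Fourier into transport, reaction and remainder pieces, exploits the divergence-free cancellation and $\partial_y^t b_2=-\partial_x b_1$, inviscid damping through $\Lambda_t^{-1}$ and the multiplier $M$, and—crucially—runs a two-tier energy: a high-frequency energy $A^N$ and a lower-order energy $A^{N'}_\mu$ carrying an extra weight $e^{c\mu t\mathbf{1}_{k\neq0}}$. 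The factor $t$ produced by $\partial_y^t=\partial_y-t\partial_x$ in the reaction terms is then converted into $\mu^{-1}$ via $t\,e^{-c\mu t}\lesssim\mu^{-1}$, and combined with one factor $\mu^{-1/2}$ from the dissipation this gives the $\mu^{-3/2}$ in \eqref{est:bvbnR} and \eqref{est:bvbaR}. Without a replacement for your coercivity claim—e.g.\ this high/low frequency splitting with the exponential weight, or some other device to absorb the linear-in-$t$ growth of $\partial_y^t$ acting on the undamped modes—your proposal does not close.
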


Let us comment on these results:
\begin{itemize}
\item Proposition \ref{prop:instability} shows instability in terms of $(v,b)$ for
  the non-resistive case. Hence, the (horizontal) magnetic dissipation is shown
  to be necessary for long-time stability results for $(v,b)$.
  
  However, similarly as in the Boussinesq equations
  \cite{bedrossian21,zillinger2021echo}, in principle stability results in terms
  of other unknowns such as the magnetic potential $\phi =
  (-\Delta_t)^{-1}\nabla^\perp_t b$ could hold for longer or even infinite
  times, which remains an exciting question for future research.
\item Theorem \ref{thm:anisoThres} establishes a stability threshold
  $\gamma=\frac{3}{2}$. In particular, we stress that the lack of vertical magnetic dissipation not only poses a key challenge of our analysis but results in a different threshold value than the fully dissipative setting \cite{liss2020sobolev, Dolce}.
  
  Indeed, the main constraint on our stability threshold is given by the control of the nonlinearity $v \cdot \nabla_t b$ and the reduced decay rates already at the linearized level (see Section \ref{linstab}).
  As we show in Section \ref{hfw}, our estimates of the so-called reaction terms \eqref{est:bvbnR} and \eqref{est:bvbaR} require a lower bound on the threshold by $\tfrac 3 2 $ and are expected to be optimal for this partial dissipation case. 
\item  For simplicity of notation, in Theorem \ref{thm:anisoThres} we have
  stated our result for the case $\mu := \nu_x=\nu_y=\kappa_x$.
  As we discuss in Sections \ref{linstab} and \ref{bootHyp}, more generally, instead of equality it suffices to require that $\frac{1}{2\alpha}\nu_y\le \kappa_x\le C \nu_y^{\frac 1 3}$, similarly as in the full dissipation case studied in \cite{Dolce}. Furthermore, we expect that results can be be extended to the case of purely vertical viscous dissipation with additional technical effort.   
  \item Due to missing vertical dissipation, we obtain no decay of the $x$-averaged magnetic field $b_=$ which is forced by the nonlinearity.
\end{itemize}
To prove our results, it is convenient to work with the unknowns
\begin{align*}
  p_1= \Lambda^{-1}_t \nabla^\perp_t \cdot  v, \ p_2= \Lambda^{-1}_t \nabla^\perp_t \cdot b; \quad \Lambda_t:= \sqrt{-\Delta_t}. 
\end{align*}
Similarly to the vorticity and current, the curl operator $\nabla_t^{\perp}$
eliminates the pressure and yields a scalar quantity, while the
operator $ \Lambda^{-1}_t \nabla^\perp_t \cdot $ is of order $0$.
Moreover, since  $v$ and $b$ are divergence-free, similarly to viscosity
formulations of the 2D Navier-Stokes equations, it can be shown by integration
by parts that 
\begin{align*}
\begin{split}
  \Vert A v\Vert_ {L^2} &= \Vert A p_1 \Vert_{L^2}, \\
  \Vert A b\Vert_ {L^2} &= \Vert A p_2 \Vert_{L^2},    
\end{split}
\end{align*}
for all Fourier multiplier $A$ which commute with $\nabla_t$ and $\Lambda_t$. This, in particular, includes $\langle \nabla\rangle^N$ which corresponds to the Sobolev norm $\| \cdot \|_{H^N}$.

In terms of these unknowns our equations read
\begin{align}
  \label{eq_p}
\begin{split}
     \partial_t p_1 - \partial_x \partial_x^t \Delta^{-1}_t p_1- \alpha \partial_x p_2 &= \nu \cdot \Delta_t p_1 +\Lambda^{-1}_t  \nabla^\perp_t (b\nabla_t b- v\nabla_t v), \\
  \partial_t p_2 +\partial_x \partial_x^t \Delta^{-1}_t p_2 - \alpha \partial_x p_1 &= \kappa \cdot \Delta_t p_2  +\Lambda^{-1}_t \nabla^\perp_t (b\nabla_t v- v\nabla_t b), \\
  \nu= (\mu, \mu), \ \kappa &= (\mu, 0).
\end{split}
\end{align}

The remainder of the article is structured as follows:
\begin{itemize}
\item In Section \ref{linstab}, as a first step we establish linear stability of the
  equations \eqref{eq_p}. In view of the lack of vertical resistivity we here crucially rely
  on the interaction of $p_1$ and $p_2$ due the the underlying constant magnetic field. Moreover, we discuss the effects of partial dissipation and the resulting limited (optimal) decay rates in time.
\item In Section \ref{bootHyp}, we introduce a bootstrap method for the proof of
  Theorem \ref{thm:anisoThres}. Decomposing into low and high frequency
  contributions here yields several error terms, which are handled in different
  subsections.
  In particular, we need to distinguish between the evolution of the $x$-average
  (which does not experience enhanced dissipation due to the shear) and its
  $L^2$-orthogonal complement, as well as different frequency decompositions of the
  nonlinear terms (called reaction and transport terms in the literature). 
\item More precisely, in Subsection \ref{dnl} we collect all nonlinear terms
  which can be estimated in a straightforward way.
 In view of partial magnetic dissipation a main challenge is given by the effect
 of $v\nabla_t b$ on $p_2$ at high frequencies. Here, we distinguish between
 terms without $x$-average in Subsection \ref{hfw} and with average in
  Subsection \ref{hfa} and perform a decomposition into a transport and a reaction term.
 The low frequency regime is discussed in Subsection \ref{lf} and does not require a very precise analysis.
\item As a complementary result, in Section \ref{instab} we establish instability of the non-resistive, viscous MHD equations and prove Proposition
  \ref{prop:instability}. Here we first prove linear algebraic instability and then deduce a nonlinear norm inflation result as a corollary.
\end{itemize}

\subsection{Notations and conventions}
\label{sec:notation}
For two real numbers $a,b \in \R$ we denote the minimum and maximum as 
\begin{align*}
    \min(a,b)&=a\wedge b,\\
    \max(a,b)&=a\vee b.
\end{align*}
We use the notation $f\lesssim g$ if there exists a constant $C$ independent of all relevant parameters such that $|f|\le C |g| $. Furthermore, we write $f\approx g$ if $f\lesssim g$ and $g\lesssim f $.

Moreover, for any vector or scalar $v$ we define 
\begin{align*}
    \langle v \rangle&= (1+\vert v\vert^2)^{\frac 1 2 }.
\end{align*}
For a function $f(x,y) \in L^2(\T\times\R)$ we denote the $x$-average and its $L^2$-orthogonal complement as
\begin{align*}
    f_=(y) &= \int_\bbT f(x,y) dx,\\
    f_{\neq }&= f-f_{0}.
\end{align*}

Throughout this text, unless noted otherwise, the spatial variables $(x,y)\in \T \times \R$ are periodic in the horizontal direction and the respective Fourier variables are denoted as
\begin{align*}
    (k,\xi)\in (\bbZ, \R )
\end{align*}
or $(l,\eta)$. The norms $\Vert \cdot \Vert_{L^p}$ and $\Vert \cdot \Vert_{H^N}$ refer to the standard Lebesgue and Sobolev norms for functions on $\T \times \R$.
For time-dependent functions we denote $L^p H^s=L^p_t H^s$ as the space with the norm 
\begin{align*}
    \Vert f \Vert_{L^pH^s}&= \left\Vert \Vert f\Vert_{H^s(\T\times \R)}\right \Vert_{L^p(0,T)}.
\end{align*}
We define the weight $A^N$ and $A^{N'}_\mu$ by the Fourier multipliers 
\begin{align*}
    A^N&= M \langle \nabla \rangle^N, \\
     A^{N'}_\mu  &= M \langle \nabla \rangle^{N'} e^{c\mu t\textbf{1}_{k\neq 0 }},
\end{align*}
for $3<N' \le N-2$ and $0<c< \tfrac 12(1-\sqrt{\tfrac 2 3 })$. With slight abuse of notation we identify the multiplier operators with their Fourier symbols. The operator $M$ is a time dependent Fourier multiplier, introduced in \cite {bedrossian2016sobolev}, and is defined to satisfy the following equation:
\begin{align*}
    -\tfrac {\dot M }{M }&= \tfrac {\vert k \vert } {k^2 + \vert \xi -kt\vert^2},\\
    M(0,k,\xi)&=1.
\end{align*}
That is, $M$ is given as
\begin{align*}
    M(t,k,\xi)&= \exp\left( - \int_0^t d\tau\tfrac {\vert k \vert }{k^2+ (\xi-k\tau )^2}\right).
\end{align*}
In particular, the operator $M$ is comparable to the identity in the sense that
\begin{align*}
    1\ge M(t,k,\xi) \ge c
\end{align*}
for some constant $c$ and all $k\neq 0$ (and $M(t,0,\xi):=1$ for $k=0$).

The operators $A$ thus define energies comparable to Sobolev (semi)norms:
\begin{align*}
    \Vert A^N \cdot \Vert_{L^2} &\approx \Vert\cdot\Vert_{H^N},\\
    \Vert A^{N'}_\mu  \cdot \Vert_{L^2} &\approx \Vert e^{c\mu t\textbf{1}_{k\neq 0}   } \cdot \Vert_{H^N}.
\end{align*}
In particular, since $N$ is sufficiently large, the norm defined by $A^N$ satisfies an algebra property.

\section{Linear stability } \label{linstab}
In this section we study the stability of the linearized version of the
equations \eqref{eq_p}:
\begin{align}
  \label{eq_p_lin}
\begin{split}
     \partial_t p_1 - \partial_x \partial_x^t \Delta^{-1}_t p_1- \alpha \partial_x p_2 &= \nu \cdot \Delta_t p_1,\\
  \partial_t p_2 +\partial_x \partial_x^t \Delta^{-1}_t p_2 - \alpha \partial_x p_1 &= \kappa \cdot \Delta_t p_2,\\
  \nu= (\mu, \mu), \ \kappa &= (\mu, 0).
\end{split}
\end{align}
The ode tools to establish stability of such systems are well-known in related
systems such as the Boussinesq equations \cite{lai2021optimal,bedrossian21,bianchini2020linear, masmoudi2023asymptotic,zillinger2020boussinesq}.

Our main results are summarized in the following proposition.
\begin{prop}[Linear stability]
  \label{prop:lin_stability}
  Let $\mu>0$, $\alpha> \tfrac 1 2 $ and $N\geq 6$ be as in Theorem
  \ref{thm:anisoThres}. Then the equations \eqref{eq_p_lin} are stable in $H^N$
  in the sense that for any choice of
  initial data $p_{in} \in H^N$ the corresponding solution satisfies
  \begin{align*}
    \|p\|_{L^\infty H^N} + \mu^{1/2} \|\nabla_t p_1\|_{L^2 H^N} + \mu^{1/2} \|\p_x p_2\|_{L^2 H^N} \lesssim e^{-C\mu t} \|p_{in}\|_{H^N}.  
  \end{align*}
\end{prop}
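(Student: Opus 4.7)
I would analyze \eqref{eq_p_lin} mode by mode in Fourier space. Writing $\hat p_j(t,k,\xi)$ for the Fourier coefficients, the $x$-average ($k=0$) mode decouples: $\hat p_1$ satisfies $\dot{\hat p}_1=-\mu \xi^2 \hat p_1$ while $\hat p_2$ is conserved, which accounts for the $k=0$ part of the estimate (the $x$-average of $p_2$ entering trivially, consistent with the indicator $\mathbf{1}_{k\neq 0}$ in the definition of $A^{N'}_\mu$). The rest of the analysis is devoted to $k\neq 0$. Setting $\eta(t)=\xi-kt$ and $\phi(t)=k^2+\eta^2$, the Fourier transform of \eqref{eq_p_lin} reads
\begin{align*}
\dot{\hat p}_1 &= \tfrac{k\eta}{\phi}\hat p_1 + i\alpha k\,\hat p_2 - \mu\phi\,\hat p_1,\\
\dot{\hat p}_2 &= -\tfrac{k\eta}{\phi}\hat p_2 + i\alpha k\,\hat p_1 - \mu k^2\,\hat p_2.
\end{align*}
The two main obstacles to a straightforward energy estimate are the secular inviscid--damping coefficient $k\eta/\phi$, pointwise bounded by $\tfrac{1}{2}$ but not integrable in time, and the absence of vertical resistivity, which leaves only the weak dissipation $-\mu k^2\hat p_2$ for the second component.

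\textbf{Weighted energy with cross correction.} Motivated by the multiplier $A^{N'}_\mu$, I work with the Fourier-side energy
\[
\mathcal E(t)= M^2\langle k,\xi\rangle^{2N}e^{2c\mu t}\Bigl[\,|\hat p_1|^2+|\hat p_2|^2+\lambda(t)\,\mathrm{Im}(\overline{\hat p_1}\,\hat p_2)\Bigr],
\]
where $M$ is the Bedrossian--Masmoudi--Vicol multiplier of the introduction, $0<c<\tfrac{1}{2}(1-\sqrt{2/3})$ matches the exponential weight in $A^{N'}_\mu$, and $\lambda(t,k,\xi)=-\tfrac{2\eta}{\alpha\phi}$ is a time-dependent cross correction. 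The pointwise bound $|\eta|/\phi\le 1/(2|k|)$ together with $\alpha>\tfrac{1}{2}$ gives $|\lambda|\le 1/(\alpha|k|)\le 1/\alpha<2$, so combined with $|\mathrm{Im}(\overline{\hat p_1}\hat p_2)|\le\tfrac{1}{2}(|\hat p_1|^2+|\hat p_2|^2)$ the functional $\mathcal E$ is comparable to $(|\hat p_1|^2+|\hat p_2|^2)$ times the Sobolev and exponential weights. The role of $\lambda$ is to neutralize the secular term: a direct computation gives
\[
\tfrac{d}{dt}\mathrm{Im}(\overline{\hat p_1}\hat p_2)=\alpha k\bigl(|\hat p_1|^2-|\hat p_2|^2\bigr)-\mu(\phi+k^2)\,\mathrm{Im}(\overline{\hat p_1}\hat p_2),
\]
and our choice of $\lambda$ makes $\lambda\alpha k(|\hat p_1|^2-|\hat p_2|^2)$ cancel exactly the secular contribution $\tfrac{2k\eta}{\phi}(|\hat p_1|^2-|\hat p_2|^2)$ arising from $\tfrac{d}{dt}(|\hat p_1|^2+|\hat p_2|^2)$.

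\textbf{Energy identity and closure.} Differentiating $\mathcal E$ and using $-\dot M/M=|k|/\phi$ produces five groups of terms: (i) the cancelled secular contribution; (ii) the BMV multiplier gain $-(2|k|/\phi)\,\mathcal E$; (iii) the controlled growth $+2c\mu\,\mathcal E$ from the exponential factor; (iv) the residual $\dot\lambda\,\mathrm{Im}(\overline{\hat p_1}\hat p_2)$ together with the cross coupling $-\lambda\mu(\phi+k^2)\mathrm{Im}(\overline{\hat p_1}\hat p_2)$; and (v) the dissipative sink $-2\mu\phi|\hat p_1|^2-2\mu k^2|\hat p_2|^2$. Using $\dot\eta=-k$ and $\dot\phi=-2k\eta$ one checks $|\dot\lambda|\lesssim |k|/\phi$, so (iii)--(iv) are absorbed by (ii) and (v) provided the constants $\alpha>\tfrac{1}{2}$ and $c<\tfrac{1}{2}(1-\sqrt{2/3})$ are respected. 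The outcome is
\[
\tfrac{d}{dt}\mathcal E + c'\mu\langle k,\xi\rangle^{2N}e^{2c\mu t}\bigl(\phi|\hat p_1|^2+k^2|\hat p_2|^2\bigr)\le 0
\]
for some $c'>0$. Integration in $t$ and Plancherel summation in $(k,\xi)$ then give the $L^\infty H^N$ bound, the dissipation norms $\mu^{1/2}\|\nabla_t p_1\|_{L^2H^N}$ and $\mu^{1/2}\|\p_x p_2\|_{L^2H^N}$, and the decay $e^{-C\mu t}$ for $k\neq 0$ modes.

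\textbf{Main obstacle.} The delicate point is the simultaneous control of the non-integrable secular factor $k\eta/\phi$ (which forces the cross correction $\lambda$) and the weak dissipation $-\mu k^2|\hat p_2|^2$. This is precisely what dictates the explicit thresholds $\alpha>\tfrac{1}{2}$ (coercivity of $\mathcal E$ after inclusion of $\lambda$) and $c<\tfrac{1}{2}(1-\sqrt{2/3})$ (absorption of the exponential growth by the partially available $\mu$-dissipation). The same anisotropy in dissipation is what later forces the nonlinear stability exponent to increase from the fully dissipative value to $\gamma=3/2$ in Theorem~\ref{thm:anisoThres}.
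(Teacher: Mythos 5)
Your proof is correct and follows essentially the same route as the paper: a mode-by-mode Fourier reduction together with an energy corrected by a cross term of size $\sim \tfrac{\eta}{\alpha\phi}\,\Im(\overline{\hat p_1}\hat p_2)$, whose coercivity uses $\alpha>\tfrac12$, which cancels the secular term $\tfrac{k\eta}{\phi}(|\hat p_1|^2-|\hat p_2|^2)$ and whose residual and cross-dissipative contributions are absorbed into the multiplier gain and the anisotropic dissipation. The only cosmetic difference is that you fold the weights $M$, $\langle k,\xi\rangle^{N}$ and $e^{c\mu t}$ directly into the pointwise energy and close a single differential inequality, whereas the paper shifts time so that $\xi=0$, works with the unweighted energy, and concludes via a Gronwall argument using the integrable $\mathcal{O}(t^{-2})$ error.
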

As we discuss in the proof, in the case $\nu \leq \kappa \leq \nu^{3}$ the optimal decay rate for large times is given by $\mu=\min(\nu^{1/3},\kappa)$. In particular, the coupling induced by the underlying magnetic field cannot yield enhanced dissipation rates for both components once the viscous dissipation becomes too large.

\begin{proof}[Proof of Proposition \ref{prop:lin_stability}]
  We note that in this linear evolution equation \eqref{eq_p_lin} all
  coefficient functions are independent of both $x$ and $y$.
  Therefore the equations decouple after a Fourier transform and we may equivalently
  consider the ode system
  \begin{align}
    \begin{split}
      \partial_t \hat  p_1 - \tfrac {k (\xi-kt )}{k^2+(\xi-kt)^2} \hat p_1- \alpha ik\hat  p_2 &=-\nu (  k^2+(\xi-kt)^2)\hat  p_1,  \\
      \partial_t\hat  p_2 +\tfrac {k (\xi-kt )}{k^2+(\xi-kt)^2}\hat  p_2 - \alpha ik\hat p_1 &= -\kappa k^2\hat  p_2,
    \end{split}
  \end{align}
  for an arbitrary but fixed frequency $(k,\eta) \in \Z \times \R$.
  Since the equations are trivial for $k=0$, in the following we further without
  loss of generality may assume that $k\neq 0$. Furthermore, after shifting $t$
  by $\frac{\xi}{k}$, we may assume that $\xi=0$ and thus obtain a system of the
  form
  \begin{align}
  \label{eq:linodesystem}
  \begin{split}
    \dt
    \begin{pmatrix}
      p_1 \\ p_2
    \end{pmatrix}
    =
    \begin{pmatrix}
      -\frac{t}{1+t^2}- \nu k^2 (1+t^2) & i\alpha k \\
      i\alpha k &  \frac{t}{1+t^2} - \kappa k^2
    \end{pmatrix}
    \begin{pmatrix}
      p_1\\ p_2
    \end{pmatrix},  
  \end{split}
  \end{align}
  where we dropped the hats for simplicity of notation.
  
In a first naive estimate, we can test this equations with $(p_1,p_2)$ and obtain
that
\begin{align*}
  \dt (|p_1|^2 + |p_2|^2) \leq (\frac{|t|}{1+t^2} - \mu k^2) (|p_1|^2 + |p_2|^2),
\end{align*}
which already yields the desired decay for times $|t|\gg (\mu k^2)^{-1}$.
However, a Gronwall-type estimate on the remaining interval would only yield a
very rough upper bound on the possible growth by
\begin{align*}
  \exp\left( \int_{|t|\lesssim (\mu k^2)^{-1}}\frac{|t|}{1+t^2} dt \right) \lesssim (1+\mu k^2)^2.
\end{align*}

In order to improve this estimate, a common trick is to make use of the fact
that $|\alpha|$ is relatively large and to consider
\begin{align*}
  E= |p_1|^2 + |p_2|^2 + \frac{t}{1+t^2} \Re \left( \frac{1}{i\alpha} p_1 \overline{p_2} \right).
\end{align*}
Since $|\alpha|>\frac{1}{2}$ this energy is positive definite and comparable to
$|p_1|^2 +|p_2|^2$, with a constant which degenerates as $|\alpha| \downarrow \frac{1}{2}$.

Computing the time derivative of the last term, we note that
\begin{align*}
  & \quad \frac{t}{1+t^2} \p_t \Re \left( \frac{1}{i\alpha} p_1 \overline{p_2} \right)\\
  & \leq  \frac{t}{1+t^2} (|p_1|^2-|p_2|^2) \\
  & \quad + \frac{|t|}{1+t^2} \frac{1}{|\alpha|} \nu k^2 (1+t^2) |p_1||p_2| \\
  & \quad + \frac{|t|}{1+t^2} \frac{1}{|\alpha|} \kappa k^2 |p_1||p_2| \\
  & \quad + \mathcal{O}(t^{-2})|p_1||p_2|.
\end{align*}
The first term exactly cancels out the possibly large contribution in $\dt
(|p_1|^2+|p_2|^2)$. For the second and third term we use that fact that
$\frac{1}{\alpha} < 2$ and that these terms can hence be absorbed into the
dissipation terms at the cost of a slight loss of constants, provided that 
\begin{align*}
    \frac{1}{2\alpha}\nu \leq \kappa \leq \nu^{1/3}.
\end{align*}
Noting that $\dt \frac{|t|}{1+t^2}= \mathcal{O}(t^{-2})$ is integrable
in time, we thus arrive at
\begin{align*}
  \dt E \lesssim \mathcal{O}(t^{-2})E - \nu k^2 (1+t^2)|p_1|^2 - \kappa k^2 |p_2|^2.
\end{align*}
Further defining
\begin{align*}
  \tilde{E} = E \exp(\int^t \mathcal{O}(\tau^{-2}) d\tau),
\end{align*}
it follows that $\tilde{E}\approx E$ decays exponentially in time and that the damping terms are integrable in time, which yields the desired result.

We further remark that for $t$ (corresponding to times $t+\frac{\xi}{k}$) such that $|t|\lesssim (\mu k^2)^{-1/3}$ the system \eqref{eq:linodesystem} exhibits mixing enhanced dissipation, even though the dissipation for the magnetic component is only horizontal.
Indeed, after relabeling $p_1 \mapsto i p_1$ and introducing the energy $E$ to control contributions by $\frac{t}{1+t^2}$, this follows from the fact that the eigenvalues of the matrix
\begin{align*}
    \begin{pmatrix}
        -\mu k^2 (1+t^2) & -\alpha \\
        \alpha & -\mu k^2
    \end{pmatrix}
\end{align*}
are given by 
\begin{align*}
    \lambda_{1,2} = - \frac{\mu k^2 (2+t ^2)}{2} \pm \sqrt{\frac{1}{4}(\mu k^2 t^2)^2 - \alpha^2}.
\end{align*}
Since $|\alpha|>\frac{1}{2}$ by our assumption on $t$ the square root is purely imaginary and hence $\Re(\lambda_1)=\Re(\lambda_2)$ is comparable to the enhanced dissipation term 
\begin{align*}
    - \mu k^2 (1+t^2).
\end{align*}

However, for times much larger than this (that is, far away from $\frac{\xi}{k}$), the same eigenvalue computation shows that 
\begin{align*}
    \lambda_1 \approx -\mu k^2 \langle t\rangle^2, \ \lambda_2 \approx -\mu k^2
\end{align*}
and hence enhanced dissipation can only be expected for one of the eigenvalues.
\end{proof}
This linear result highlights the effects of the coupling induced by the underlying constant magnetic field and shows which optimal decay estimates can be expected.
In particular, it clearly illustrates that the loss of vertical magnetic dissipation incurs a change of decay rate compared to the fully dissipative case.

\section{Bootstrap hypotheses and outline of proof}
\label{bootHyp}
We next turn to the full nonlinear problem \eqref{eq_p}, where we intend to
treat the nonlinear contributions as errors and make use of the smallness of our
initial data.

Our approach here follows a bootstrap argument, which is by now standard in the
field (see, for instance, \cite{bedrossian2016sobolev}).
In the notation of Section~\ref{sec:notation} we assume that at the initial time
\begin{align}
  \label{eq:initialassump}
  \|A^N p\|_{L^2}^2 + \|A_{\mu}^{N'}p\|_{L^2}^2 \le c_0  \epsilon^2
\end{align}
 for $3<N' \le N-2$. The constant $c_0=c_0(\alpha)>0$ will later be chosen small enough and tends to $0$ as $\alpha \to \tfrac 1 2$.  Given this estimate at the initial time, our aim in the remainder of this section is to establish the following estimates for the corresponding solution:
 
\begin{itemize}
\item 
\textbf{High frequency estimates} 
\begin{align}
  \label{eq:hfassump}
  \begin{split}
     \Vert A^N  p_1 \Vert^2_{L^\infty L^2 }+\mu \Vert A ^N \nabla_t p_1 \Vert^2_{L^2 L^2 }+ \Vert \sqrt{-\tfrac {\dot M} {M}  } A^N   p_1 \Vert^2_{L^2L^2 } &<\eps^2, \\
     \Vert A^N  p_2 \Vert^2_{L^\infty L^2 }+\mu \Vert A^N  \partial_x  p_2 \Vert^2_{L^2L^2 }+\Vert\sqrt{  -\tfrac {\dot M} {M} } A^N   p_2 \Vert^2_{L^2L^2 } &<\eps^2.
  \end{split}
\end{align}
\item  \textbf{Low frequency estimates} 
  \begin{align}
    \label{eq:lfassump}
    \begin{split}
     \Vert A^{N'}_\mu p_1 \Vert^2_{L^\infty L^2 }+\mu \Vert A ^{N'}_\mu \nabla_t p_1 \Vert^2_{L^2 L^2 }+\Vert \sqrt{-\tfrac {\dot M} {M}  } A^{N'}_\mu   p_1 \Vert^2_{L^2L^2 } &<\eps^2, \\
     \Vert A^{N'}_\mu  p_2 \Vert^2_{L^\infty L^2 }+\mu \Vert A^{N'}_\mu  \partial_x  p_2 \Vert^2_{L^2L^2 } +\Vert\sqrt{-  \tfrac {\dot M} {M} } A^{N'}_\mu   p_2 \Vert^2_{L^2L^2 }&<\eps^2.
    \end{split}
\end{align}
\end{itemize}
By local well-posedness and our assumptions on the initial data, these estimates
are satisfied at least on some (small) time interval $(0,T)$. In our bootstrap approach we assume for the sake of contradiction that the maximal time $T$ with this property is finite. We then show that on that same time interval all estimates hold with improved bounds instead, which however would imply that the estimates could be extended for a small additional time, contradicting the
maximality of $T$.

With this understanding, we suppress $T$ in our notation (see Section \ref{sec:notation}) and all $L^p$ norms in time are understood to be norms on $L^p(0,T)$.

The splitting into high and low frequencies is essential to close the estimates in Subsection \ref{hfw} and Subsection \ref{hfa}. In particular, we need the $e^{-c\mu t }$ decay to bound the so-called reaction error. 
Moreover, we require strong control of commutators involving $A$ in order to control the so-called transport error.
Both error terms impose strong restrictions on the energies and do not allow to close estimates in an easy way.
We overcome this difficulty by linking separate energy estimates in the high frequency part and the low frequency part. On the one hand, we can use the additional $e^{-c\mu t } $ in the low frequency part to our benefit in the analysis of the high frequency part. On the other hand, the difference in regularity allows us to control derivatives in the low frequency estimate by the using high frequency estimate.

Given a solution $(p_1,p_2)$ of \eqref{eq_p} and letting $A= A^N, A^{N'}_\mu$,
computing time derivatives we need to control
\begin{align*}
      &\quad  \p_t \Vert A p_1 \Vert^2_{L^2 }+2(1-c)\mu \Vert A  \nabla_t  p_1 \Vert^2_{L^2 }+2\Vert \sqrt{-\tfrac {\dot M} {M}  } A   p_1 \Vert^2_{L^2 }\\
     &\le 2 \langle A^2 p_1, \partial_x \partial_x^t \Delta^{-1}_t p_1 +\Lambda^{-1}_t  \nabla^\perp_t (b\nabla_t b- v\nabla_t v)\rangle =: L[p_1]+NL[p_1], \\
    &\quad   \p_t\Vert A  p_2 \Vert^2_{ L^2 }+ 2(1-c)\mu \Vert A   \partial_x  p_2 \Vert^2_{L^2 }+2 \Vert\sqrt{  -\tfrac {\dot M} {M} } A   p_2 \Vert^2_{L^2 }\\
    &\le 2 \langle A^2 p_2, -\partial_x \partial_x^t \Delta^{-1}_t p_2 +\Lambda^{-1}_t \nabla^\perp_t (b\nabla_t v- v\nabla_t b)\rangle=: L[p_2]+NL[p_2].
\end{align*}
Here we have split contributions into linear (that is, quadratic integrals) and nonlinear terms (that is, trilinear integrals). 
Note that the choice of $0<c< \tfrac 12(1-\sqrt{\tfrac 2 3 })$ is made such that $1-c$ is not too small to absorb linear effects for $\alpha$ close to $\tfrac 1 2$. For later reference, we note the following estimates:
\begin{align}
    \Vert \partial_x^2 \Lambda_t^{-1} \Lambda^{-1} p\Vert_{H^N }\lesssim \tfrac 1 t \Vert  p_{\neq}\Vert_{H^N }\label{damp}
\end{align}
and for $A= A^N, A^{N'}_\mu$
\begin{align}
  \label{eq:simpleenergy}
  \begin{split}
    \Vert A p_{1,\neq} \Vert_{L^2 L^2 }&\lesssim \mu^{-\frac 1 2 } \eps,\\
    \Vert A p_{2,\neq} \Vert_{L^2 L^2 }&\lesssim \mu^{-\frac12} \eps.
  \end{split}
\end{align}
Furthermore, for the nonlinear terms we will often use the equality 
\begin{align*}
\begin{split}
  \Vert A v\Vert_ {L^2} &= \Vert A p_1 \Vert_{L^2}, \\
  \Vert A b\Vert_ {L^2} &= \Vert A p_2 \Vert_{L^2}.
\end{split}
\end{align*}

Throughout the following sections, we aim to establish smallness of the
contributions by the linear terms $L[\cdot]$ and nonlinear terms $NL[\cdot]$. More precisely, we
establish the following proposition.
\begin{prop}[Control of errors]
\label{prop:errors}
  Under the assumptions of Theorem \ref{thm:anisoThres} suppose that the initial
 data satisfies the smallness condition \eqref{eq:initialassump} and let $T>0$
 be such the high and low frequency estimates \eqref{eq:hfassump},
 \eqref{eq:lfassump} are satisfied.
 Then on the same time interval it holds that
 \begin{align*}
   \int_0^T L[p_1]+ L[p_2] dt &\leq \tfrac{1}{2\alpha}  (c_0+ 1)\eps^2 + O(\mu^{-1}\eps^3) ,\\
   \int_0^T  NL[p_1] +NL[p_2] dt & \leq \mu^{-\frac 3 2 } \eps^3.
 \end{align*}
\end{prop}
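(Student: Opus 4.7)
The strategy is to split each of the two quadratic inner products into its linear part $L[p_j]$ and its nonlinear part $NL[p_j]$, then to treat the linear piece by an $\alpha$-coupling energy correction mirroring Proposition~\ref{prop:lin_stability}, and the nonlinear piece by paraproduct decomposition combined with the high/low frequency splitting of the bootstrap hypotheses \eqref{eq:hfassump} and \eqref{eq:lfassump}. Throughout I would work uniformly for $A=A^N$ and $A=A^{N'}_\mu$, using that the two multipliers commute with $\nabla_t$ and $\Lambda_t$ so that all frequency-side computations go through verbatim.

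For the linear piece I would work frequency by frequency. The Fourier symbol of $\partial_x\partial_x^t\Delta_t^{-1}$ equals $\frac{k(\xi-kt)}{k^2+(\xi-kt)^2}$, which after the shift $t\mapsto t+\xi/k$ reduces to the $\tfrac{t}{1+t^2}$ term analyzed in Section~\ref{linstab}. I therefore add to $\|Ap_1\|_{L^2}^2+\|Ap_2\|_{L^2}^2$ the cross correction
\begin{align*}
    \mathcal{E}_{\mathrm{cross}} := \int_{\Z\times\R}\tfrac{t}{1+t^2}\,\Re\!\left(\tfrac{1}{i\alpha}\,\widehat{Ap_1}\,\overline{\widehat{Ap_2}}\right)dk\,d\xi.
\end{align*}
Since $\alpha>\tfrac 1 2$, $|\mathcal{E}_{\mathrm{cross}}|$ is bounded by a fraction (explicitly $\tfrac{1}{4\alpha}$) of the uncorrected energy, which is the source of the $\tfrac{1}{2\alpha}(c_0+1)$ prefactor. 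Its time derivative cancels the problematic $\tfrac{t}{1+t^2}(|Ap_1|^2-|Ap_2|^2)$ coming from $L[p_1]+L[p_2]$; the remaining cross-dissipation errors of size $\mu k^2(1+t^2)|Ap_1||Ap_2|$ and $\mu k^2 |Ap_1||Ap_2|$ are absorbed into $\mu\|A\nabla_t p_1\|_{L^2 L^2}^2$ and $\mu\|A\p_x p_2\|_{L^2 L^2}^2$ at a constant loss permitted by the choice $c<\tfrac 12(1-\sqrt{2/3})$ and the hypothesis $\tfrac{1}{2\alpha}\nu\le\kappa\le\nu^{1/3}$. The integrable factor $\p_t\tfrac{t}{1+t^2}=\mathcal{O}(t^{-2})$, together with the nonlinear pieces appearing when \eqref{eq_p} is inserted into $\p_t \mathcal{E}_{\mathrm{cross}}$ and with \eqref{eq:simpleenergy}, produces the $O(\mu^{-1}\eps^3)$ remainder.

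For the nonlinear piece I would paraproduct-decompose each of the four products $v\nabla_t v$, $b\nabla_t b$, $v\nabla_t b$, $b\nabla_t v$ and split each resulting trilinear integral by where $A^2 p_j$ sits in frequency. The benign combinations are handled in Subsection~\ref{dnl} via the algebra property of $A^N$ and the integrable enhanced-dissipation weight $\sqrt{-\dot M/M}$ combined with \eqref{eq:simpleenergy}. The genuinely dangerous terms originate from $v\cdot\nabla_t b$ in $NL[p_2]$, since $p_2$ lacks vertical dissipation. For these I would follow Subsections~\ref{hfw} and \ref{hfa}, splitting off the contribution of the $x$-average $b_=$ and decomposing each remaining piece into a reaction term (low-frequency $b$, high-frequency $A^2 p_2$) and a transport term (low-frequency $v$, high-frequency $A^2 p_2$). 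The reaction term is controlled using the exponential gain $e^{c\mu t}$ carried by $A^{N'}_\mu$ on the low-frequency factor together with the inviscid-damping bound \eqref{damp}; the transport term is controlled via a commutator estimate for $A^N$ and the algebra property.

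The main obstacle will be the reaction part of $v\cdot\nabla_t b$ on $p_2$ at high frequency without $x$-average, as singled out in the discussion after Theorem~\ref{thm:anisoThres}: it is precisely this term that forces $\gamma=\tfrac 3 2$. Heuristically, one pays $\mu^{-1/2}$ from the time integration of $\p_x b$, another $\mu^{-1/2}$ from absorbing the resulting energy into $\mu\|A\p_x p_2\|_{L^2 L^2}^2$, and a further $\mu^{-1/2}$ to compensate for the absent vertical magnetic dissipation; the $e^{c\mu t}$ gain of $A^{N'}_\mu$ and the inviscid damping \eqref{damp} just suffice to close this balance. The resulting bound $\mu^{-3/2}\eps^3$ is expected to be sharp within this framework, and absorbs all other nonlinear contributions.
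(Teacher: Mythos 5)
Your proposal is correct and follows essentially the same route as the paper: your cross-corrected energy for the linear term is just the time-integrated form of the paper's manipulation (substituting the equations into $|p_1|^2-|p_2|^2$ and integrating by parts in time), producing the same boundary terms behind $\tfrac{1}{2\alpha}(c_0+1)\eps^2$ plus dissipation-absorbable pieces and the $O(\mu^{-1}\eps^3)$ trilinear remainder, while the nonlinear terms are organized exactly as in the paper via the average/no-average splitting and the transport/reaction/remainder decomposition, with the $e^{c\mu t}$-weighted low-frequency energy absorbing the linear-in-$t$ growth from $\partial_y^t$ in the reaction terms that set the $\mu^{-\frac{3}{2}}\eps^3$ bound. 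No genuinely different ideas and no gaps to flag.
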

As a consequence, supposing that $\alpha>\frac 1 2$ and $\epsilon\ll \mu^{3/2}$, this implies that both the high frequency and low frequency estimates \eqref{eq:hfassump}, \eqref{eq:lfassump} improve and
thus $T$ can only have been maximal if $T=\infty$, which proves Theorem \ref{thm:anisoThres}. Thus proving Proposition \ref{prop:errors} is our main concern in this section and our proof is split over the following subsections. The most important estimates, highlighting the effects of partial dissipation, are established in Subsections \ref{sec:linerror}, \ref{hfw} and \ref{hfa}. 

We note that the nonlinear terms
\begin{align*}
    \langle A  p_1 , \Lambda^{-1}_t  \nabla^\perp_t A  (b\nabla_t b- v\nabla_t v)\rangle &=-\langle A  v , A  (b\nabla_t b- v\nabla_t v)\rangle,\\
    \langle A  p_2 , \Lambda^{-1}_t  \nabla^\perp_t A  (b\nabla_t v- v\nabla_t b)\rangle&=-\langle A  b  ,  A (b\nabla_t v- v\nabla_t b)\rangle,
\end{align*}
for $A= A^N, A^{N'}_\mu $ are all trilinear products involving
\begin{align*}
  a^1a^2a^3\in \{vvv,vbb,bbv,bvb\}
\end{align*}
and we will use this notation to refer to the specific terms. Since the $x$-averages do not experience fast (mixing enhanced) decay under the
dissipation, we split these products as 
\begin{align*}
    \langle A a^1, A(a^2\nabla_t a^3)\rangle &= \langle A a^1_{\neq} , A(a^2_{\neq}\nabla_t a^3_{\neq})_{\neq} \rangle\\
    & \quad +\langle A a^1_{\neq} , A(a^2_=\nabla_t a^3_{\neq}) \rangle\\
    &\quad +\langle A a^1_{\neq} , A(a^2_{\neq}\nabla_t a^3_=) \rangle\\
    &\quad + \langle A a^1_{=} , A(a^2_{\neq}\nabla_t a^3_{\neq})_=,
\end{align*}
where the full splitting is only used for the $bvb$ term.

\subsection{Estimate of the linear error}
\label{sec:linerror}
In this subsection we establish the estimate of the linear terms in Proposition
\ref{prop:errors}. Here, we use some of the same techniques as in the proof of
linear stability in Section \ref{linstab}, but instead focus on establishing
quantitative bounds on the time integral.

Taking a Fourier transform of \eqref{eq_p} yields
\begin{align}
\begin{split}
     \partial_t \hat  p_1 - \tfrac {k (\xi-kt )}{k^2+(\xi-kt)^2} \hat p_1- \alpha ik\hat  p_2 &=-\mu (  k^2+(\xi-kt)^2)\hat  p_1 +\calF[NL[p_1]],  \\
      \partial_t\hat  p_2 +\tfrac {k (\xi-kt )}{k^2+(\xi-kt)^2}\hat  p_2 - \alpha ik\hat p_1 &= -                      \mu k^2\hat  p_2  +\calF[NL[p_2]]\label{eq_pf}.
\end{split}
\end{align}
Recalling the various contributions, we aim to estimate 
\begin{align*}
    & \quad \langle A^2 p_2 , -\partial_x \partial_y^t \Delta^{-1}_t p_2 \rangle + \langle A^2 p_1 , \partial_x \partial_y^t \Delta^{-1}_t p_1\rangle\\
    &= \sum_k \int d\xi A^2 \tfrac {k (\xi-kt )}{k^2+(\xi-kt)^2}(\vert \hat p_1 \vert^2-\vert \hat p_2 \vert^2 ) .
\end{align*}
In the following, with slight abuse of notation, we omit the hat denoting the Fourier transform and only consider $k\neq 0$, since for $k=0$ this term vanishes. 

Similarly as in the linear stability results of Section \ref{linstab}, we note that the Fourier multiplier a priori is not integrable in time and cannot easily be estimated by the partial dissipation. Hence, we rely on the coupling induced by the underlying magnetic field to eliminate some of this contribution and to provide better decay.
More precisely, multiplying the equations \eqref{eq_pf}  with $\hat{p}_2, \hat{p}_1$ and omitting the hats for simplicity of notation, we obtain the following identity:
\begin{align*}
  & \quad \vert  p_1 (k) \vert^2-\vert  p_2 (k)\vert^2\\
  &= -\tfrac 1 {i \alpha k }( p_1 \overline { i \alpha k p_1 } +i \alpha k p_2 \overline {  p_2} )\\
    &=-\tfrac 1 {i \alpha k } p_1 (\partial_t  \overline{p}_2 +\tfrac {k (\xi-kt )}{k^2+(\xi-kt)^2}\  \overline{p}_2 + \mu k^2  \overline{p}_2  -\overline{\calF} [{NL}[p_2]]))\\
    & \quad -\tfrac 1 {i \alpha k } \overline p_2  (\partial_t  p_1 -\tfrac {k (\xi-kt )}{k^2+(\xi-kt)^2}  p_1+(\mu k^2+\mu (\xi-kt)^2)  p_1  -\calF[NL[p_1]]\\    &= \tfrac {-1}{i\alpha k } (\partial_t (p_1  \overline  p_2)+ \mu (k^2+(\xi-kt)^2)p_1 \overline  p_2+ \mu k^2p_1 \overline  p_2) \\
    & \quad -\tfrac 1 {\alpha ik } ( p_1, p_2) \cdot \calF[\Lambda^{-1}_t \nabla_t^\perp (b\nabla_t b -v\nabla_t v , b\nabla_t v-v\nabla_t b)].
\end{align*}
Thus we split $L$ into two linear terms and one nonlinear term:
\begin{align}
  \label{eq:ONL}
  \begin{split}
    L&=2\sum_k \int d\xi A^2 \tfrac {k (\xi-kt )}{k^2+(\xi-kt)^2}\tfrac {-1}{i\alpha k } \partial_t (p_1  \overline  p_2) \\
    & \quad + 2\sum_k \int d\xi A^2 \tfrac {k (\xi-kt )}{k^2+(\xi-kt)^2}\tfrac {-1}{i\alpha k }(2 \mu k^2+\mu (\xi-kt)^2)p_1 \overline  p_2 \\
    & \quad -\tfrac 2 {\alpha }\langle  A  \partial_y^t \Delta_t^{-1}  (p_1,p_2)_{\neq} ,A \Lambda^{-1}_t \nabla_t^\perp (b\nabla_t b -v\nabla_t v , b\nabla_t v-v\nabla_t b)_{\neq}\rangle \\
    &=L_1 +L_{\mu}+ONL.
  \end{split}
\end{align}
We estimate $L_{\mu}$ by 
\begin{align*}
    L_{\mu}&= \tfrac 2\alpha \mu \sum_{k \neq 0 }\int d\xi A^2 \tfrac {(2 k^2+(\xi-kt )^2 )(\xi-kt )}{k^2+(\xi-kt)^2}p_1 \overline  p_2 \\
    &= \tfrac 2\alpha \mu \sum_{k \neq 0 }\int d\xi A^2 \tfrac {(2 k^2+(\xi-kt )^2 )(\xi-kt )}{(k^2+(\xi-kt)^2)^{\frac 3 2}}  p_1 (k^2+(\xi-kt)^2)^{\frac 1 2 } \overline  p_2 \\
    &\le \tfrac 2 \alpha  \mu \sup_s \left( \tfrac {(2+s^2)s}{(1+s^2)^{\frac 3 2 }} \right)  \Vert A \partial_x p_2 \Vert_{L^2} \Vert A \nabla_t p_1 \Vert_{L^2} \\
    &\le \sqrt{\tfrac 23 } \tfrac 1 \alpha  \mu   (\Vert A \partial_x p_2 \Vert_{L^2}^2 +\Vert A \nabla_t p_1 \Vert_{L^2}^2),
\end{align*}
where we used that 
\begin{align*}
    \left \vert \tfrac {(2 k^2+(\xi-kt )^2 )(\xi-kt )}{(k^2+(\xi-kt)^2)^{\frac 3 2}}\right\vert&= \left\vert \tfrac {(2 +(\frac\xi k -t )^2 )(\frac \xi k-t )}{(1+(\frac \xi k-t)^2)^{\frac 3 2}}\right\vert \\
    &\le \sup_s \left( \tfrac {(2+s^2)s}{(1+s^2)^{\frac 3 2 }} \right)\\
    &\le \sqrt{\tfrac 23 }.
\end{align*}

To estimate $L_1 $, we integrate in time and integrate by parts in space to
deduce that 
\begin{align*}
    & \quad \int d\tau \sum_k \int d\xi A^2 \tfrac {k (\xi-kt )}{k^2+(\xi-kt)^2}\tfrac {-1}{i\alpha k } \partial_t (p_1  \overline  p_2)\\
    &= \left[ \tfrac {-1}{i\alpha } \sum_k \int d\xi A^2 \tfrac {(\xi-kt )}{k^2+(\xi-kt)^2}p_1  \overline  p_2\right]^t_0 \\
    & \quad +\int d\tau \tfrac {1}{i\alpha  } \sum_k \int d\xi p_1  \overline  p_2 \partial_t ( A^2 \tfrac { (\xi-kt )}{k^2+(\xi-kt)^2}) \\
    &= \left[ \tfrac {-1}{i\alpha } \sum_k \int d\xi A^2 \tfrac {(\xi-kt )}{k^2+(\xi-kt)^2}p_1  \overline  p_2\right]^t_0 \\
    &\quad +\int d\tau \tfrac {2}{i\alpha  } \sum_k \int d\xi p_1  \overline  p_2 \tfrac {\dot M } M  A^2 \tfrac { (\xi-kt )}{k^2+(\xi-kt)^2} \\
    &\quad +c \mu \textbf{1}_{A=A^{N'}_\mu } \int d\tau \tfrac {2}{i\alpha  } \sum_k \int d\xi p_1  \overline  p_2   A^2 \tfrac { (\xi-kt )}{k^2+(\xi-kt)^2} \\
    &\quad +\int d\tau \tfrac {1}{i\alpha  } \sum_k \int d\xi p_1  \overline  p_2  A^2 \tfrac { k(k^2-(kt-\xi)^2)}{(k^2+(\xi-kt)^2)^2}.
\end{align*}
So we infer by Hölder's inequality that
\begin{align*}
& \quad \int d\tau \sum_k \int d\xi A^2 \tfrac {k (\xi-kt )}{k^2+(\xi-kt)^2}\tfrac {-1}{i\alpha k } \partial_t (p_1  \overline  p_2)\\
    &\le \tfrac 1 \alpha  (\Vert A p_1 (0)\Vert_{ L^2 } \Vert A p_2(0) \Vert_{L^2 }+ \Vert A p_1 (t)\Vert_{ L^2 } \Vert A p_2 (t)\Vert_{ L^2 }) \\
    &\quad + \mu \Vert A \p_x  p_1\Vert_{L^2  L^2 } \Vert A\sqrt{ -\tfrac {\dot M } M } p_2 \Vert_{L^2  L^2 }\\
    &\quad + \tfrac 1\alpha  \Vert A \sqrt{- \tfrac {\dot M } M } p_1 \Vert_{L^2L^2 } \Vert A \sqrt{ -\tfrac {\dot M } M } p_2 \Vert_{L^2  L^2 }
\end{align*}
and thus 
\begin{align*}
  & \quad \int  L d \tau -\int ONL d\tau \\
  &\le \tfrac 1  {2\alpha } (\Vert A p_1 (0)\Vert_{L^2 }^2+ \Vert A p_2(0) \Vert_{ L^2 }^2)\\
    &\quad + \tfrac 1  {2\alpha } (\Vert A p_1 \Vert_{ L^\infty L^2 }^2+ \Vert A p_2(t) \Vert_{ L^\infty L^2 }^2)\\
    &\quad + \tfrac {1 } \alpha (\mu  (1-c) \Vert \p_x A p\Vert_{L^2}^2+\mu (1-c) \Vert \p_y^t  A p\Vert_{L^2}^2+ \Vert \sqrt{- \tfrac {\dot M }M } A p\Vert_{L^2} ^2) .
\end{align*}
Using the dissipation estimates \eqref{eq:simpleenergy}, we therefore obtain 
\begin{align}
    \int  L d \tau  &\le \tfrac 1{2\alpha }(c_0+1)  \eps ^2 + \int ONL d\tau, \label{est:L}
\end{align}
where the $ONL$ part will be estimated at the beginning of the next subsection.

\subsection{Immediate nonlinear estimates for $A^N$ }\label{dnl}
In this subsection, we collect some estimates which can be obtained in a
straight forward approach using standard techniques (e.g. see \cite{bedrossian2016sobolev}). In particular, for these terms we are not constrained by the lack of vertical resistivity.
For most estimates we do not aim to establish optimal (mixing enhanced) bounds, since these bounds are in any case better than the ones involving horizontal resistivity and hence do not affect the over all stability threshold. In the following we write $A=A^N$.

\textbf{ONL estimate:} Using integration by parts in space and Hölder's
inequality, the nonlinear contribution in \eqref{eq:ONL} can be estimated by
\begin{align*}
    ONL&= \tfrac 2 {\alpha }\langle  A  \partial_y^t \Delta_t^{-1}  v_{\neq} ,A  (b\nabla_t b -v\nabla_t v )_{\neq}\rangle \\
    &\quad + \tfrac 2 {\alpha }\langle  A  \partial_y^t \Delta_t^{-1} b_{\neq} ,A   (b\nabla_t v-v\nabla_t b)_{\neq}\rangle \\
    &= \tfrac 2 {\alpha }\langle  A  \partial_y^t \Delta_t^{-1}  (\nabla^\perp_t \otimes v_{\neq}) ,A  (b\otimes  b -v\otimes v )_{\neq}\rangle \\
    &\quad + \tfrac 2 {\alpha }\langle  A  \partial_y^t \Delta_t^{-1} (\nabla^\perp_t \otimes b _{\neq}) ,A   (b\otimes v-v\otimes b)_{\neq}\rangle \\
    &\lesssim \tfrac 2 \alpha \Vert  A (v, b)_{\neq} \Vert^2_{L^2}\Vert A (v, b) \Vert_{L^2}.
\end{align*}
Recalling the bounds \eqref{eq:simpleenergy} and integrating in time we thus
obtain that 
\begin{align}
    \int ONL d\tau \lesssim \mu^{-1} \eps^3.\label{est:ONL}
\end{align}

\textbf{Estimates with an $x$-average in the second component:}
Let $a^1a^2a^3\in \{vvv,vbb,bbv,bvb\}$, then we need to estimate the trilinear products
\begin{align*}
    \langle A a^1_{\neq}  , A(a^2_{=} \nabla_t a^3_{\neq} ) \rangle &= \langle A a^1_{\neq} , A(a^2_{=,1} \partial_x a^3_{\neq})\rangle\\
    &\lesssim \Vert A a^1_{\neq}\Vert_{L^2}\Vert A  a^2_{=,1 } \Vert_{L^2}\Vert A \partial_x  a^3_{\neq}\Vert_{L^2}. 
\end{align*}
Integrating in time and again using the bound  \eqref{eq:simpleenergy} yields
a control by 
\begin{align}
    \int d \tau \langle Aa^1 , A(a^2_{=} \nabla_t a^3) \rangle &\lesssim \mu^{-1} \eps^3.\label{est:aver}
\end{align}
The influence of the underlying $x$-averaged velocity and magnetic field on the
average-less parts can thus be easily controlled by the dissipation, provided
$\epsilon\ll \mu$.
In the following we focus on terms involving $a^2_{\neq}$. 

\textbf{vvv estimate:}
We first discuss the velocity non-linearity and use the algebra property of
$H^N$ and the bounds on $A$ to estimate
\begin{align*}
    \langle A  v , A  v_{\neq}\nabla_t v\rangle&\le \Vert A v\Vert_{L^2}\Vert A v_{\neq}\Vert_{L^2}\Vert A\nabla_t v\Vert_{L^2}.
\end{align*}
Here, the contribution by $\|A \nabla_t v\|_{L^2}$ is square integrable in time
due to the dissipation \eqref{eq:simpleenergy}, while $\|A v_{\neq}\|_{L^2}$ is square
integrable in time by the inviscid damping estimates \eqref{damp}.
Integrating in time thus yields a bound by
\begin{align}
     \int d \tau \langle A  v , A  (v_{\neq}\nabla_t v)\rangle &\lesssim \mu^{-1} \eps^3 .\label{est:vvv}
\end{align}

\textbf{vbb estimate:}
For the contributions by the $vbb$ nonlinearity we intend to argue similarly,
but have to account for the lack of vertical magnetic dissipation (which we
compensate for by using the full fluid dissipation).
We may split the integral as 
\begin{align*}
    \langle A  v , A  (b_{\neq}\nabla_tb)\rangle&= \int A v_1 A(b_{1,\neq}\partial_x +b_{2,\neq}\partial_y^t)b_1\\
                                    &\quad +\int A v_2 A(b_{1,\neq}\partial_x +b_{2,\neq}\partial_y^t) b_2.
\end{align*}
For the second term we integrate by parts to obtain
\begin{align*}
    \int A v_1 A(b_{2,\neq}\partial_y^t b_1)&=- \int A  \partial_y^t v_1 A( b_{2,\neq} b_1)-\int A v_1 A(\partial_y^t  b_{2,\neq}b_1) .
\end{align*}
Furthermore, since $b$ is divergence-free, it holds that $\partial_y^t b_2 = -\p_x b_1 $ and hence 
\begin{align*}
    \langle A  v , A  (b_{\neq}\nabla_tb)\rangle
    &\le \Vert A v\Vert_{L^2}  \Vert A b_{\neq}\Vert_{L^2}  \Vert A \partial_x  b\Vert_{L^2} + \Vert \partial_y^t  v\Vert_{L^2}  \Vert A b_{\neq}\Vert_{L^2}  \Vert  A b_2\Vert_{L^2}.
\end{align*}
We may therefore estimate this term using the full fluid and horizontal magnetic dissipation \eqref{eq:simpleenergy} and integrating in time yields a bound by 
\begin{align}
\label{est:vbb}
    \int d \tau \langle A  v , A  (b_{\neq}\nabla_tb)\rangle &\lesssim \mu^{-1}\eps^3.
\end{align}
\textbf{bbv  estimate:} 
Finally, for the $bbv$ contribution, we may again use the full fluid dissipation and the algebra property of $A$ (and $H^N$) to obtain a bound
\begin{align*}
    \langle A  b  ,  A ( b_{\neq}\nabla_t v)\rangle&\lesssim \Vert Ab \Vert_{L^2}\Vert Ab_{\neq}  \Vert_{L^2}\Vert A \nabla_t v\Vert_{L^2}. 
\end{align*}
Integrating in time and using \eqref{eq:simpleenergy} we thus obtain a bound by 
\begin{align}
\label{est:bbv}
    \int d\tau \langle A  b  ,  A ( b_{\neq}\nabla_t v)\rangle&\lesssim \mu^{-1} \eps^3.
\end{align}

\subsection{High frequency $bvb$ term without $x$-average}\label{hfw}
Having established several straightforward estimates using the full fluid dissipation, in this and the following subsections we establish bounds for the high frequency (that is, $A^N$ terms as in \eqref{eq:hfassump}) terms involving $bvb$.
For simplicity, we write $A=A^N$ and aim to establish the estimate 
\begin{align*}
    \langle A  b  ,  A ( v_{\neq}\nabla_t b)\rangle\lesssim \mu^{-\frac 3 2 } \eps^3.
\end{align*}
We split the $bvb$ term according to (non)vanishing $x$-averages:
\begin{align*}
    \langle A  b  ,  A ( v_{\neq}\nabla_t b)\rangle&= \langle A  b_{\neq}  ,  A ( v_{\neq}\nabla_t b_{\neq})_{\neq}\rangle\\
    &\quad + \langle A  b_{\neq}  ,  A ( v_{\neq}\nabla_t b_{=})_{\neq}\rangle\\
    &\quad + \langle A  b_=  ,  A ( v_{\neq}\nabla_t b_{\neq})_=\rangle.
\end{align*}
Let us first consider the term without any $x$-averages, which can be written as 
\begin{align*}
    \langle A  b_{\neq}  ,  A ( v_{\neq}\nabla_t b_{\neq})\rangle&= \int A b_{1,\neq} A((v_{1,\neq}\partial_x +v_{2,\neq }\partial_y^t) b_{1,\neq})\\
    &\quad +\int A b_{2,\neq} A((v_{1,\neq}\partial_x +v_{2,\neq}\partial_y^t) b_{2,\neq}).
\end{align*}
We estimate the second contribution using the algebra property of $H^N$ and that $\partial_y^tb_2=-\partial_x b_1$, since $b$ is divergence-free:
\begin{align*}
    \int d \tau  \int &  A b_{2,\neq} A(v_{1,\neq}\partial_x +v_{2,\neq}\partial_y^t) b_{2,\neq}\\
    &\le \int d \tau \Vert A b_{2,\neq}\Vert_{L^2} (\Vert Av_{1,\neq}\Vert_{L^2}\Vert A \partial_x b_{2,\neq} \Vert_{L^2}  +\Vert Av_{2,\neq}\Vert_{L^2}\Vert A\partial_y^t b_{2,\neq}\Vert_{L^2} ) \\
&\le \int d \tau \Vert A b_{2,\neq}\Vert_{L^2} (\Vert Av_{1,\neq}\Vert_{L^2}\Vert \partial_x b_{2,\neq} \Vert_{L^2}  +\Vert Av_{2,\neq}\Vert_{L^2}\Vert A \partial_x b_{1,\neq}\Vert_{L^2} ).
\end{align*}
Employing Hölder's inequality this contribution can thus be estimated as 
\begin{align}
\begin{split}
\int d \tau  \int &  A b_{2,\neq} A((v_{1,\neq}\partial_x +v_{2,\neq}\partial_y^t) b_{2,\neq})\\
&\le\int d \tau  \Vert A b_{2,\neq}\Vert_{L^2} \Vert Av_{\neq}\Vert_{L^2}\Vert A \partial_x b_{\neq} \Vert_{L^2}\\
&\le \Vert A b_{2,\neq}\Vert_{L^2L^2} \Vert Av_{\neq}\Vert_{L^\infty L^2}\Vert A \partial_x b_{\neq} \Vert_{L^2L^2}\\
&\lesssim  \mu^{-1} \eps^3.\label{est:bvbn1}
\end{split}
\end{align}
It remains to control the contribution by $b_{1,\neq}$, which in view to the lack of vertical resistivity is the hardest term to control.
Since the velocity field $v$ is divergence-free, we observe that 
\begin{align*}
    \int A b_{1,\neq }(v_{\neq}\nabla_t  A b_{1,\neq })=0.
\end{align*}
Therefore, we obtain the following cancellations and introduce a splitting in Fourier space:
\begin{align*}
    \int A b_{1,\neq } A(v_{\neq} \nabla_t  b_{1,\neq })&= \int A b_{1,\neq } (A(v\nabla_t  b_{1,\neq })-(v_{\neq}\nabla_t  A b_{1,\neq }))\\
    &= \sum_{k,l,k-l \neq 0} \iint d(\xi,\eta ) A(k,\xi )b_1(k,\xi)   \tfrac {(A(k,\xi) -A(l,\eta))(\xi l -\eta k) }{\sqrt{(k-l)^2+(\xi-\eta-(k-l)t)^2}}\\
    &\qquad\qquad  p_1(k-l ,\xi-\eta)b_1(l,\eta) \\
    &=T+R+\calR. 
\end{align*}
Here, the Fourier regions
\begin{align*}
    \Omega_T&=\{\vert k-l,\xi-\eta \vert \le \tfrac 18 \vert l , \eta\vert \}, \\
    \Omega_R&=\{\vert l , \eta\vert  \le \tfrac 18 \vert k-l,\xi-\eta \vert\},\\
    \Omega_\calR &=\{\tfrac 18 \vert l , \eta\vert  \le  \vert k-l,\xi-\eta \vert\le 8\vert l , \eta\vert \},
\end{align*}
correspond to the the \emph{transport} ($T$) or low-high term, \emph{reaction} ($R$) or high-low term and the \emph{remainder} ($\calR $) or high-high term. In the following we omit the $\neq$ subscripts. 

\textbf{Transport term:} 
Since $ \vert k-l,\xi-\eta \vert \le \tfrac 18 \vert l , \eta\vert$ we obtain that $ \vert l,\eta \vert \approx  \vert k,\xi \vert$.
Without loss of generality we assume that $\xi \le \eta $, since we can use either of the following splittings 
\begin{align*}
    \xi l -k \eta &= (\xi-\eta)l -(k-l)\eta \\
    &= (\xi-\eta)k - \xi(k-l).
\end{align*}
Thus using the second equality we estimate
\begin{align*}
    T &\le \Vert \partial_y\Lambda^{-1}_t p_1 \Vert_{L^\infty } \Vert Ab_1 \Vert_{L^2} \Vert \partial_x A b_1\Vert_{L^2}\\
    &\quad + \sum_{k,l\neq 0} \iint d(\xi,\eta )  \textbf{1}_ {\Omega_T }     (\textbf{1}_{2\langle t\rangle ( k \vee l)\ge \xi}+\textbf{1}_{2\langle t\rangle ( k \vee l)\le \xi })   \\
    &\qquad\qquad \cdot A(k,\xi )b_1(k,\xi)   \tfrac {(A(k,\xi) -A(l,\eta))\xi (l - k) }{\sqrt{(k-l)^2+(\xi-\eta-(k-l)t)^2}}  p_1(k-l ,\xi-\eta)b_1(l,\eta),
\end{align*}
where we distinguished between $ 2 \langle t\rangle ( k \vee l)\ge \xi $  and $ 2 \langle t\rangle ( k \vee l)\le \xi $.

The first case is estimated by using the dissipation and \eqref{damp}:
\begin{align*}
    \sum_{k,l\neq 0} \iint d(\xi,\eta ) &\textbf{1}_{\Omega_T }   \textbf{1}_{\xi \le 2 ( k \vee l)\langle t\rangle  }  A(k,\xi )b_1(k,\xi)\\
    &\cdot    \tfrac {(A(k,\xi) -A(l,\eta))\xi (l -k) }{\sqrt{(k-l)^2+(\xi-\eta-(k-l)t)^2}} p_1(k-l ,\xi-\eta)b_1(l,\eta) \\
    &\lesssim \langle t\rangle   \Vert A b_1 \Vert_{L^2}  \Vert \Lambda^{-1}_t  \partial_x p_1 \Vert_{L^\infty } \Vert \partial_x A b_1\Vert_{L^2 } \\
    &\lesssim \Vert A b_1 \Vert_{L^2}  \Vert \Lambda \partial_x p_1 \Vert_{L^\infty } \Vert \partial_x A b_1\Vert_{L^2 } \\
    &\lesssim \Vert A b_1 \Vert_{L^2}  \Vert A p_1 \Vert_{L^2} \Vert \partial_x A b_1\Vert_{L^2 }. 
\end{align*}
For the second case,  $ 2 \langle t\rangle ( k \vee l)\le \xi $, we need to estimate
\begin{align*}
    (A^N(k,\xi) - A^N(l,\eta)) &= (M(k,\xi)\vert k,\xi\vert^N -  M(l,\eta)\vert l,\eta \vert^N )\\
    &= M(k,\xi)(\vert k,\xi\vert^N - \vert l,\eta \vert^N)\\
    &\quad +  M(l,\eta)(\tfrac {M(k,\xi)}{  M(l,\eta)}-1)\vert l,\eta \vert^N.
\end{align*}
By the mean value theorem, we obtain 
\begin{align*}
    \vert k,\xi\vert^N - \vert l,\eta \vert^N &\le N  \vert k-\theta l ,\xi- \theta \eta \vert^{N -1}\vert k-l , \xi- \eta \vert \\
    &\lesssim\vert k-l , \xi- \eta \vert ( \vert l,\eta \vert^{N-1} + \vert k-l , \xi- \eta \vert^{N-1} )\\
    &\lesssim\vert k-l , \xi- \eta \vert \vert l,\eta \vert^{N-1} . 
\end{align*}
For the differences in $M$ we use that for $a,b>0$ it holds that  $\vert e^{a-b}-1\vert \le e^{a+b}-1$ and hence
\begin{align*}
    \vert \tfrac {M_1(k,\xi)}{  M_1(l,\eta)}-1\vert &= \vert \exp\left( \int_0^t \tfrac {\vert l \vert} {l^2 + (\eta -ls)^2 }-\tfrac {\vert k \vert} {k^2 + (\xi -ks)^2 }  ds  \right) -1 \vert \\
    &\le \vert \exp\left( \int_0^t \tfrac {\vert l \vert} {l^2 + (\eta -ls)^2 }+\tfrac {\vert k \vert} {k^2 + (\xi -ks)^2 }  ds  \right) -1 \vert. 
\end{align*}
Thus for $\eta \ge \xi\ge 2 t(k\vee l ) $ by integrating we obtain that
\begin{align*}
    \vert \tfrac {M_1(k,\xi)}{  M_1(l,\eta)}-1\vert &\le \exp\left( \tfrac 1 {\vert l \vert }  \int_0^t \tfrac {1} {1 + (\frac \eta l -s)^2 }ds + \tfrac 1 {\vert k \vert }\int_0^t \tfrac {1} {1 + (\frac\xi k  -s)^2 }  ds \right) -1\\
    &\le \exp(\tfrac {1} {\eta } +\tfrac {1}\xi  ) -1 \\
    &\lesssim \tfrac {1} \eta+\tfrac 1\xi .
\end{align*}
Therefore, we deduce that 
\begin{align*}
    \sum_{k,l,k-l \neq 0} \iint d(\xi,\eta ) &\textbf{1}_ {\Omega_T }\textbf{1}_{\xi \ge 2 (k\vee l) t } 
    A(k,\xi )b_1(k,\xi)   \tfrac {(A(k,\xi) -A(l,\eta))\xi(l - k) }{\sqrt{(k-l)^2+(\xi-\eta-(k-l)t)^2}} \\
    &\qquad\qquad  p_1(k-l ,\xi-\eta)b_1(l,\eta) \\
    &\lesssim \Vert A b_1 \Vert_{L^2} \Vert \Lambda_t^{-1} \partial_x p_1 \Vert_{L^\infty } \Vert   A b_1\Vert_{L^2 }\\
    &\lesssim  \langle t\rangle^{-1} \Vert A b_1 \Vert_{L^2}  \Vert \Lambda \partial_x p_1 \Vert_{L^\infty } \Vert   A b_1\Vert_{L^2 }\\
    &\lesssim  \langle t\rangle^{-1} \Vert A b_1 \Vert_{L^2}  \Vert A p_1 \Vert_{L^2 } \Vert   A b_1\Vert_{L^2},
\end{align*}
where we used the estimate \eqref{damp}. Combining all estimates, we have derived the following estimate of the transport term: 
\begin{align}
\begin{split}
    \int Td\tau &\lesssim  \Vert A b_1 \Vert_{L^\infty L^2} \Vert A p_1 \Vert_{L^\infty L^2}\Vert A  b_1\Vert_{L^2L^2}\\
    &\lesssim   \mu^{-\frac 1 2  } \eps^3.\label{est:bvbnT}
\end{split}
\end{align}

\textbf{Reaction term:} 
Since $\vert l , \eta\vert  \le \tfrac 18 \vert k-l,\xi-\eta \vert$ we obtain $ \vert k-l,\xi-\eta \vert \approx  \vert k,\xi \vert$. With the identity 
\begin{align*}
    \xi l -k \eta &= l(\xi-\eta -(k-l) t) - (k-l) (\eta -lt) 
\end{align*}
and $A(k,\xi) -A(l,\eta)\lesssim A(k-l,\xi-\eta)$ we infer 
\begin{align*}
    R&= \sum_{k,l,k-l \neq 0} \iint d(\xi,\eta ) \textbf{1}_{\Omega_R }A(k,\xi )b_1(k,\xi)   \tfrac {(A(k,\xi) -A(l,\eta)) ( l(\xi-\eta -(k-l) t) - (k-l) (\eta -lt) ) }{\sqrt{(k-l)^2+(\xi-\eta-(k-l)t)^2}} \\
    &\qquad\qquad \cdot  p_1(k-l ,\xi-\eta)b_1(l,\eta) \\
    &\le \Vert A b_1 \Vert_{L^2} \Vert A \partial_y^t \Lambda_t^{-1} p_1  \Vert_{L^2 } \Vert \partial_x b_1 \Vert_{L^\infty }  \\
    &\quad +\Vert A b_1 \Vert_{L^2} \Vert A  \Lambda_t^{-1} p_1  \Vert_{L^2 }\Vert \partial_y^t\partial_x^2  b_1\Vert_{L^\infty } \\
    &\quad +\Vert \partial_x A b_1 \Vert_{L^2} \Vert A  \Lambda_t^{-1} p_1  \Vert_{L^2 }\Vert \partial_y^t \partial_x b_1\Vert_{L^\infty } .
\end{align*}
We split $\partial_y^t = \partial_y-t\p_x$ and use the definition of the low-frequency multiplier $A^{N'}_\mu $ to estimate
\begin{align*}
    \Vert \langle \partial_x \rangle^2 \partial_y^t  b_1\Vert_{L^\infty }&\le\Vert \langle \partial_x \rangle^2 \partial_y  b_1\Vert_{L^\infty }+\Vert \langle \partial_x \rangle^2 t\partial_x  b_1\Vert_{L^\infty }\\
    &\le t \Vert \Lambda^{N'}  b_1\Vert_{L^2} \\
    &\lesssim te^{-c\mu t }\Vert   A^{N'}_\mu   b_1\Vert_{L^2 }\\
    &\lesssim \mu^{-1} \Vert  A^{N'}_\mu  b_1\Vert_{L^2 }.
\end{align*}
Therefore, integrating in time yields the estimate
\begin{align}
\label{est:bvbnR}  
\begin{split}
    \int Rd \tau &\lesssim \Vert A b_1 \Vert_{L^2 L^2} \left(\Vert A \partial_y^t \Lambda_t^{-1} p_1  \Vert_{L^2L^2 } \Vert A b_1 \Vert_{L^\infty  L^2 }\right)  \\
    &\quad +\mu^{-1} \Vert A \partial_x   b_1 \Vert_{L^2 L^2}\Vert A  \Lambda_t^{-1} p_1  \Vert_{L^2L^2 } \Vert  A^{N'}_\mu b_1\Vert_{L^\infty L^2 }\\
    &\lesssim \eps^3   \mu^{-\frac 32 }.  
\end{split}
\end{align}

\textbf{$\calR $ term:}
We consider the Fourier region where $ \tfrac 18 \vert l , \eta\vert  \le  \vert k-l,\xi-\eta \vert\le 8\vert l , \eta\vert $. Thus, we have the bounds $\vert k, \xi \vert \lesssim \vert l, \eta \vert$ and $A( k , \xi )\lesssim A(l,\eta )\approx A(k-l, \xi-\eta) $. 
Furthermore, we note that
\begin{align*}
    \xi l -\eta k&\le \vert l, \eta  \vert^2,
\end{align*}
and thus estimate the remainder terms as
\begin{align*}
    \calR &= \sum_{k,l,k-l \neq 0} \iint d(\xi,\eta )1_{\Omega_\calR  } A(k,\xi )b_1(k,\xi)  \\
    &\qquad\qquad \tfrac {(A(k,\xi) -A(l,\eta))(\xi l -\eta k) }{\sqrt{(k-l)^2+(\xi-\eta-(k-l)t)^2}}  p_1(k-l ,\xi-\eta)b_1(l,\eta) \\
    &\lesssim  \Vert A b_1 \Vert_{L^2} \Vert A \Lambda^{-1}_t p_1 \Vert_{L^2} \Vert \Lambda^2 b_1 \Vert_{L^\infty }\\
    &\lesssim \Vert A b_1 \Vert_{L^2} \Vert A \Lambda^{-1}_t p_1 \Vert_{L^2} \Vert A b_1 \Vert_{L^2 }.
\end{align*}
Hence after integrating in time, we deduce that
\begin{align}
\label{est:bvbncalR}  
    \int\calR &\lesssim \ \Vert A b_1 \Vert_{L^2 L^2} \Vert \sqrt{-\tfrac {\dot M} M} A p_1 \Vert_{L^2L^2} \Vert A b_1 \Vert_{L^\infty  L^2}\lesssim\mu^{-\frac 1 2 } \eps^3.   
\end{align}

Combining the estimates \eqref{est:bvbn1}, \eqref{est:bvbnT}, \eqref{est:bvbnR} and \eqref{est:bvbncalR}, we finally conclude that
\begin{align}
\label{est:bvbn}
    \langle A  b_{\neq}  ,  A ( v_{\neq}\nabla_t b_{\neq})_{\neq}\rangle\lesssim \mu^{-\frac 3 2 } \eps^3.
\end{align}

\subsection{High frequency estimates for $bvb$ terms with $x$-averages} \label{hfa}
In this subsection we aim to estimate the remaining terms in the $bvb$ integrals, which involve $x$-averages. 
We consider the two terms 
\begin{align*}
    \langle A  b_{\neq}  &,  A ( v_{\neq}\nabla_t b_{=})_{\neq}\rangle+ \langle A  b_=  ,  A ( v_{\neq}\nabla_t b_{\neq})_=\rangle\\
    &= \langle A  b_{1,\neq}  ,  A ( v_{\neq}\nabla_t b_{1,=})_{\neq}\rangle+ \langle A  b_{1,=}  ,  A ( v_{\neq}\nabla_t b_{1,\neq})_=\rangle,
\end{align*}
where we used that $b_{2,=}=0$, since $b$ is divergence-free. Using integration by parts and the fact that $v$ is divergence-free, we obtain that
\begin{align*}
    \langle A  b_{1,\neq}  ,   v_{\neq}\nabla_t A  b_{1,=}\rangle& + \langle A  b_{1,=} ,   v_{\neq}\nabla_t A b_{1,\neq}\rangle\\
    &= \langle v_{\neq}  ,   \nabla_t (A  b_{1,=}A b_{1,\neq})\rangle=0, 
\end{align*}
and thus 
\begin{align*}
    \langle A  b_{1,\neq}  &,  A ( v_{\neq}\nabla_t b_{1,=})\rangle+ \langle A  b_{1,=}  ,  A ( v_{\neq}\nabla_t b_{1,\neq})\rangle\\
    &=\langle A  b_{1,\neq}  ,  A ( v_{\neq}\nabla_t b_{1,=}) -  v_{\neq}\nabla_t A  b_{1,=}\rangle + \langle A  b_{1,=}  ,  A ( v_{\neq}\nabla_t b_{1,\neq})-  v_{\neq}\nabla_t A b_{1,\neq}\rangle\\
    &= \sum_{k\neq 0} \iint d(\xi,\eta) A(k,\xi) b_1 ( k ,\xi) \tfrac {(A(k,\xi)- A(0,\eta)) (-k\eta)}{\sqrt{k^2+(\xi-\eta -kt)^2} }p_1(k,\xi-\eta ) b_1(0, \eta)\\
    &\quad + \sum_{k\neq 0} \iint d(\xi,\eta) A(0,\xi) b_1 ( 0 ,\xi) \tfrac {(A(0,\xi)- A(k,\eta)) (-k\xi) }{\sqrt{k^2+(\xi-\eta -kt)^2} }p_1(k,\xi-\eta ) b_1(-k, \eta).
\end{align*}
Again we split this integrals into the transport $T$, reaction $R$ and remainder terms $\calR$ with the same definition of sets in Fourier space:
\begin{align*}
    \Omega_T&=\{\vert \xi-\eta \vert \le \tfrac 18 \vert  \eta\vert \}, \\
    \Omega_R&=\{\vert  \eta\vert  \le \tfrac 18 \vert \xi-\eta \vert\},\\
    \Omega_\calR &=\{\tfrac 18 \vert  \eta\vert  \le  \vert \xi-\eta \vert\le 8\vert  \eta\vert \}.
\end{align*}

\textbf{Transport term:}
Since $ \vert \xi-\eta \vert \le \tfrac 18 \vert  \eta\vert$ we obtain that $ \vert \eta \vert \approx  \vert \xi \vert$. 

In our estimates, we distinguish the cases $\xi \vee \eta \le 2 k\langle t \rangle $ and $\xi \vee \eta \ge 2  k \langle t \rangle$.
In the first case,  $\xi \vee \eta \le 2 k\langle t \rangle $ we obtain a bound by 
\begin{align*}
  \sum_{k\neq 0}  & \iint d(\xi,\eta) \textbf{1}_{\Omega_T}\textbf{1}_ { \xi \vee \eta \le k\langle t \rangle }  A(k,\xi) b_1 ( k ,\xi) \tfrac {(A(k,\xi)- A(0,\eta)) k\eta }{\sqrt{k^2+(\xi-\eta -kt)^2} }p_1(k,\xi-\eta ) b_1(0, \eta)\\
    + \sum_{k\neq 0} & \iint d(\xi,\eta)\textbf{1}_{\Omega_T} \textbf{1}_ {\xi \vee \eta  \le k\langle t \rangle }  A(0,\xi) b_1 ( 0 ,\xi) \tfrac {(A(0,\xi)- A(k,\eta)) k\xi }{\sqrt{k^2+(\xi-\eta -kt)^2} }p_1(k,\xi-\eta ) b_1(-k, \eta)\\
    &\le t  \Vert A b_= \Vert_{L^2 } \Vert \partial_x^2\Lambda_t^{-1} p_{1,\neq} \Vert_{L^\infty  } \Vert A b_{1,\neq}\Vert_{L^2}\\
    &\lesssim  \Vert A b_= \Vert_{L^2 } \Vert A p_{1,\neq} \Vert_{L^2} \Vert Ab_{1,\neq}\Vert_{L^2}.
\end{align*}
In the case $\xi \vee \eta \ge 2 k\langle t \rangle $, we instead estimate
\begin{align*}
     A(k,\xi)- A(0,\eta)&\le M(k,\xi) (\xi^2+k^2)^{\frac N 2 } -\eta^N \\
     &= (M(k,\xi)-1) (\xi^2+k^2)^{\frac N 2 } +((\xi^2+k^2)^{\frac N 2 }- \eta^N). 
\end{align*}
Since $\xi \ge 2 k\langle t \rangle$, in the first summand we may bound
\begin{align*}
    M(k,\xi)-1&= \exp\left( - \int_0^t \tfrac {\vert k \vert} {k^2 + (\xi -ks)^2 }  ds\right)-1 \\
    &\lesssim \tfrac 1 \xi \lesssim \tfrac 1 \eta .
\end{align*}
By the mean value theorem we further infer
\begin{align*}
    (\xi^2+k^2)^{\frac N 2 }- \eta^N&\le ((\xi-\theta \eta )^2+k^2)^{\frac {N-1} 2 }\vert k,\xi - \eta \vert  \lesssim \vert k,\xi - \eta \vert (\xi^2+k^2)^{\frac {N-1}  2}.
\end{align*}
Thus, using that $k\le\xi \lesssim \eta$,  we deduce that
\begin{align*}
    A(k,\xi)- A(0,\eta)\lesssim \vert k,\xi - \eta \vert  \eta^{N -1},\\
    A(k,\eta)- A(0,\xi)\lesssim \vert k,\xi - \eta \vert \eta^{N -1},
\end{align*}
where the proof for $A(k,\eta)-A(0,\xi)$ is analogous. Finally, we obtain
\begin{align*}
    \langle A  b_{\neq}  &,\textbf{1}_{\Omega_T} \textbf{1}_ { \eta \ge kt}  A ( v_{\neq}\nabla_t b_{=})\rangle+ \langle A  b_=  ,\textbf{1}_{\Omega_T} 1_ { \eta \ge kt}  A ( v_{\neq}\nabla_t b_{\neq})\rangle\\
    &\lesssim  \sum_{k\neq 0} \iint d(\xi,\eta) A(k,\xi) b_1 ( k ,\xi) \tfrac {\vert k,\xi - \eta \vert  \eta^{N -1} }{\sqrt{k^2+(\xi-\eta -kt)^2} }p_1(k,\xi-\eta ) b_1(0, \eta)\\
    &\quad + \sum_{k\neq 0} \iint d(\xi,\eta) A(0,\xi) b_1 ( 0 ,\xi) \tfrac {\vert k,\xi - \eta \vert  \eta^{N -1} }{\sqrt{k^2+(\xi-\eta -kt)^2} }p_1(k,\xi-\eta ) b_1(k, \eta)\\
    &\lesssim \Vert A b_= \Vert_{L^\infty} \Vert  A   \Lambda_t^{-1}  p_{1,\neq} \Vert_{L^2 } \Vert A b_{1,\neq}\Vert_{L^2}\\
    &\lesssim \Vert A b_= \Vert_{L^\infty} \Vert   A \Lambda_t^{-1}  p_{1,\neq} \Vert_{L^2 } \Vert A b_{1,\neq}\Vert_{L^2},
\end{align*}
and integrating in time yields the desired bound:
\begin{align}
\begin{split}
    \int \langle A  b_{\neq}  &, \textbf{1}_{\Omega_T}  A ( v_{\neq}\nabla_t b_{=})\rangle+ \langle A  b_=  , \textbf{1}_{\Omega_T}    A ( v_{\neq}\nabla_t b_{\neq})\rangle d\tau \\
    &\lesssim \mu^{-1  } \eps^3 .\label{est:bvbaT}
\end{split}
\end{align}

\textbf{Reaction term:}
Since $ \vert \eta \vert \le \tfrac 18 \vert \xi - \eta\vert$ we obtain $ \vert \xi-  \eta \vert \approx  \vert \xi \vert$ and thus 
\begin{align*}
    R&=\langle  A  b_{\neq}  , \textbf{1}_{\Omega_R}  A( ( v_{\neq}\nabla_t b_{=})- v_{\neq}\nabla_t A b_{=})\rangle+ \langle  A  b_=  , \textbf{1}_{\Omega_R} (A ( v_{\neq}\nabla_t b_{\neq})- v_{\neq}\nabla_t Ab_{\neq})_=\rangle\\
    &\le  \sum_{k\neq 0} \iint d(\xi,\eta)\textbf{1}_{\Omega_R}   A(k,\xi) b_1 ( k ,\xi) \tfrac {(A(k,\xi)- A(0,\eta)) k\eta }{\sqrt{k^2+(\xi-\eta -kt)^2} }p_1(k,\xi-\eta ) b_1(0, \eta)\\
    &\quad + \sum_{k\neq 0} \iint d(\xi,\eta)\textbf{1}_{\Omega_R}   A(0,\xi) b_1 ( 0 ,\xi) \tfrac {(A(0,\xi)- A(-k,\eta)) k\xi}{\sqrt{k^2+(\xi-\eta -kt)^2} }p_1(k,\xi-\eta ) b_1(-k, \eta)\\
    &\lesssim \Vert A b_{1,\neq}\Vert_{L^2} \Vert A \partial_x \Lambda_t^{-1} p_{1,\neq} \Vert_{L^2} \Vert \partial_y b_{1,=}\Vert_{L^\infty } \\
    &\quad + \Vert A b_{1,=}\Vert_{L^2} \Vert A \partial_y \partial_x^{-1}  \Lambda_t^{-1} p_{1,\neq} \Vert_{L^2} \Vert \partial_x^2 b_{1,\neq}\Vert_{L^\infty }.
\end{align*}
Expressing $\p_y =\p_y^t+t\p_x $ in terms of the time-dependent derivatives, at this point we require the splitting into high and low frequency estimates. More precisely, using the additional time decay of the low-frequency part,  we estimate 
\begin{align*}
    \Vert A \partial_y \partial_x^{-1}  \Lambda_t^{-1} p_{1,\neq} \Vert_{L^2}&\le \Vert A \partial_y^t \partial_x^{-1}  \Lambda_t^{-1} p_{1,\neq} \Vert_{L^2}+t \Vert A \Lambda_t^{-1} p_{1,\neq} \Vert_{L^2}\\
    &\lesssim \Vert A  p_{1,\neq} \Vert_{L^2}+t \Vert A\Lambda^{-1}_t  p_{1,\neq} \Vert_{L^2}
\end{align*}
and using the definition of $A^{N'}_\mu $ we can absorb the growth of the factor $t$ at the cost of a power of $\mu$: 
\begin{align*}
    \Vert \partial_x^2 b_{1,\neq}\Vert_{L^\infty }&\le \Vert \Lambda^{N'} b_{1,\neq}\Vert_{L^2 }\\
    &\lesssim  e^{-c\mu t } \Vert A^{N'}_{\mu } b_{1,\neq}\Vert_{L^2 }\\
    &\lesssim \mu^{-1} \langle t \rangle^{-1}  \Vert A^{N'}_{\mu } b_{1,\neq}\Vert_{L^2 }.
\end{align*}
Thus we obtain 
\begin{align*}
    R&\lesssim \Vert A^Np_{1,\neq}\Vert_{L^2}  \Vert A^N b_{1,=}\Vert_{L^2 }    \Vert A^N b_{1,\neq}\Vert_{L^2} \\
    &\quad + \mu^{-1}  \Vert A^N b_{1,=}\Vert_{L^2 }    \Vert A \Lambda_t^{-1}  p_{1,\neq} \Vert_{L^2} \Vert A^{N'}_{\mu } b_{1,\neq}\Vert_{L^2 } .
\end{align*}
Integrating in time then yields the estimate
\begin{align}
    \int R d\tau &\lesssim \mu^{-\frac 3 2 }  \eps^3.\label{est:bvbaR}
\end{align}

\textbf{$\calR$  term:}
The remainder term $\calR$ can be estimated by the same argument as in the case without $x$-averages in Subsection \ref{hfw}.

Combining the estimates \eqref{est:bvbaT}, \eqref{est:bvbaR} and \eqref{est:bvbn}, we conclude that the $bvb$ term can be controlled as
\begin{align}
    \langle A  b  ,  A ( v_{\neq}\nabla_t b)\rangle\lesssim \mu^{-\frac 3 2 } \eps^3 .\label{est:bvb} 
\end{align}

\subsection{Low frequency estimates } \label{lf} 
In this subsection we establish the estimates on the low frequency errors. For simplicity of presentation we present the proof of these estimates for the $bvb$ nonlinearity. The estimates with an $x$-average in the second component are analogous to the ones in Subsection \ref{dnl}. The arguments for the $vvv$, $vbb$, $bbv$ or $ONL$ trilinear terms are also analogous.

We aim to establish the bound
\begin{align*}
    \langle A^{N'}_\mu   b  ,  A^{N'}_\mu  ( v_{\neq}\nabla_t b)\rangle&\lesssim \mu^{-\frac 12 }\eps^3,
\end{align*}
and, as in the previous section, separately discuss the transport, reaction and remainder term.

For the transport term, we note that  
\begin{align*}
     v_{\neq}\nabla_t &= \nabla_t^\perp \Lambda^{-1}_t p_1 \nabla_t \\
     &= \nabla^\perp \Lambda^{-1}_t p_1 \nabla.
\end{align*}
Hence, we may rewrite
\begin{align*}
    \langle A^{N'}_\mu   b  ,  A^{N'}_\mu  ( v_{\neq}\nabla_t b)\rangle&= \langle A^{N'}_\mu   b  ,  A^{N'}_\mu  ( \nabla^\perp \Lambda^{-1}_t p_{1,\neq}\nabla b)\rangle. 
\end{align*}
In a first step, we estimate the $b_{\neq}$ term by using the algebra property of $A^{N'}$:
\begin{align*}
    \langle A^{N'}_\mu   b  &,  A^{N'}_\mu  ( \nabla^\perp \Lambda^{-1}_t p_{1,\neq}\nabla b_{\neq})\rangle\\
    &\le \Vert A^{N'}_\mu   b \Vert_{L^2} e^{c\mu_x t} \big ( \Vert A^{N'}   \nabla^\perp \Lambda_t^{-1} p_{1,\neq}\Vert_{L^2} \Vert \nabla b_{\neq}\Vert_{L^\infty }+\\& \qquad \qquad     \Vert   \nabla^\perp \Lambda_t^{-1} p_{1,\neq}\Vert_{L^\infty } \Vert A^{N'} \nabla b_{\neq}\Vert_{L^2 }\big ) \\
    &\le \Vert A^{N'}_\mu   b\Vert_{L^2}\big ( \Vert A^{N}   \Lambda_t^{-1} p_{1,\neq}\Vert_{L^2} \Vert A^{N'}_\mu  b_{\neq}\Vert_{L^2 }+  \Vert   A_\mu^{N'} \Lambda_t^{-1} p_{1,\neq}\Vert_{L^2} \Vert A^{N} b_{\neq}\Vert_{L^2}\big). 
\end{align*}
Integrating in time then yields the estimate
\begin{align}
   \int d \tau  \langle A^{N'}_\mu   b ,  A^{N'}_\mu  ( v_{\neq}\nabla_t b_{\neq} )\rangle&\lesssim \mu^{-\frac 1 2 } \eps^3.\label{est:low1}
\end{align}
Furthermore, we estimate the $b_=$ term by partial integration and the algebra property of $A^{N' } $ 
\begin{align*}
   &\quad  \langle A^{N'}_\mu   b  ,  A^{N'}_\mu  ( \nabla^\perp \Lambda^{-1}_t p_{1,\neq}\nabla b_=)\rangle\\
   & =-\langle A^{N'}_\mu   b_{1,\neq} ,  A^{N'}_\mu  ( \p_x \Lambda^{-1}_t p_{1,\neq}\p_yb_{1,=} )\rangle\\
    &=\langle \p_x A^{N'}_\mu   b_{1,\neq} ,  A^{N'}_\mu  (  \Lambda^{-1}_t p_{1,\neq}\p_yb_{1,=} )\rangle\\
    &\le \Vert  \p_x A^{N'}_\mu   b_{1,\neq} \Vert_{L^2} e^{c\mu t } \big( \Vert  A^{N'}  \Lambda^{-1}_t p_{1,\neq}\Vert_{L^2}\Vert\p_yb_{1,=} \Vert_{L^\infty }\\
    &\qquad \qquad + \Vert   \Lambda^{-1}_t p_{1,\neq}\Vert_{L^\infty } \Vert  \p_y^{N'+1} b_{1,=} \Vert_{L^2}\big ) \\
     &\lesssim  \Vert\p_x  A^{N'}_\mu   b_{1,\neq} \Vert_{L^2}\big( \Vert  A^{N'}_\mu    \Lambda^{-1}_t p_{1, \neq}\Vert_{L^2}\Vert A^{N'}b_{1,=} \Vert_{L^2}\\
     &\qquad \qquad + \Vert   A^{N'}_\mu  \Lambda^{-1}_t p_{1, \neq}\Vert_{L^2} \Vert A^{N}b_{1,=} \Vert_{L^2}\big ).
\end{align*}
Integrating in time then yields that
\begin{align}
    \int d \tau \langle A^{N'}_\mu   b_{\neq} ,  A^{N'}_\mu  ( v_{\neq}\nabla_t b_{=} )\rangle &\lesssim \mu^{-\frac 12 }\eps^3 .\label{est:low2}
\end{align}

This concludes our proof of Proposition \ref{prop:errors} and hence of Theorem \ref{thm:anisoThres}.
More precisely, the claimed estimates for both $A^N$ and $A^{N'}_{\mu}$ are obtained by combining the respective linear estimate \eqref{est:L}, the high frequency nonlinear estimates \eqref{est:ONL}, \eqref{est:aver}, \eqref{est:vvv}, \eqref{est:vbb}, \eqref{est:bbv}, \eqref{est:bvb}, and  the low frequency estimates given in \eqref{est:low1} and \eqref{est:low2}. 

We emphasize that the stability threshold of $\tfrac 3 2 $ is determined by the estimates for the action of the $v\cdot \nabla_t b$ nonlinearity in the estimate \eqref{est:bvb} and, in particular, by the estimates of the reaction terms \eqref{est:bvbnR} and \eqref{est:bvbaR}. 
These estimates are expected to be optimal and together with the linear estimates of Section \ref{linstab} highlight the effects of the lack of vertical resistivity.

The partial dissipation case considered in this article
\begin{align*}
    \kappa_y=0, \ \nu_x=\nu_y=\kappa_x>0,
\end{align*}
shows the large impact of (partial) magnetic resistivity on the behavior of the MHD equations and the (de)stabilizing role of the magnetic field.
As mentioned following Theorem \ref{thm:anisoThres}, more generally our methods of proof extend to the case where $\kappa_x$ is bounded below in terms of $\nu$:
\begin{align*}
  \nu_y^{1/3}\geq \kappa_x \geq \frac{1}{2\alpha} \nu_y.  
\end{align*}
The complementary regime, where $\kappa_x$ tends to zero quicker than $\nu_{y}$ remains an interesting topic for future work. The limiting case, $\kappa_x=0$, and the associated instability is discussed in the following section.

\section{Instability of the non-resistive MHD system}\label{instab}

As a complementary result, in this section we consider the non-resistive MHD
equations and establish the instability estimates of Proposition \ref{prop:instability}.
\subsection{Linear instability}
We begin by studying the linearized MHD equations with isotropic viscosity and
vanishing resistivity:
\begin{align}
  \label{eq:liso}
\begin{split}
     \partial_t p_1 - \partial_x \partial_x^t \Delta^{-1}_t p_1- \alpha \partial_x p_2 &= \nu \Delta_t p_1, \\
      \partial_t p_2 +\partial_x \partial_x^t \Delta^{-1}_t p_2 - \alpha \partial_x p_1 &= 0.
\end{split}
\end{align}

\begin{lemma}[Quantitative linear instability of the non-resistive MHD equations]\label{lemma:linIns}
    For the linearized  equations \eqref{eq:liso} there exists initial data $p_{in}$ such that 
    \begin{align}
    \begin{split}
        \Vert p(t)\Vert_{H^N } &\ge t\tfrac {\nu }{8\alpha^2 }\Vert p_{in}\Vert_{H^N},\\
        \Vert p(t)\Vert_{H^{N-1} } &\ge t\tfrac {\nu^2 }{32\alpha^4 }\Vert p_{in}\Vert_{H^N }.\label{eq:isolow}
    \end{split}
    \end{align}
    Furthermore, for all solutions such that at time $\tau$ it holds $p(\tau)\in H^N$, then we obtain 
    \begin{align}
      \label{eq:isoup}
        \Vert p\Vert_{H^N}\le  {\langle t\rangle^2 }\Vert p(\tau)\Vert_{H^N}. 
    \end{align}
\end{lemma}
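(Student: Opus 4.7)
The plan is to Fourier transform the linear system \eqref{eq:liso} in the spatial variables $(x,y)$. Since all coefficients are $(x,y)$-independent, this decouples the system into a family of $2\times 2$ ODEs parameterized by the Fourier modes $(k,\xi) \in \Z \times \R$. The $k=0$ mode decouples further into a heat equation for $\hat p_1$ and a conservation law for $\hat p_2$, and hence does not contribute to instability. For $k \neq 0$, a time shift $t \mapsto t - \xi/k$ absorbs $\xi$, and I would work with the reduced system
\begin{align*}
\dt \hat p_1 &= -\tfrac{t}{1+t^2}\hat p_1 + i\alpha k \hat p_2 - \nu k^2(1+t^2)\hat p_1,\\
\dt \hat p_2 &= +\tfrac{t}{1+t^2}\hat p_2 + i\alpha k \hat p_1.
\end{align*}

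For the upper bound \eqref{eq:isoup}, I would compute $\dt(|\hat p_1|^2 + |\hat p_2|^2)$ directly: the magnetic cross-term proportional to $\alpha k$ cancels upon symmetrization and the dissipation is non-positive, leaving $\dt|\hat p|^2 \leq \tfrac{2|t|}{1+t^2}|\hat p|^2$. Gronwall from $\tau$ to $t$ integrates the right-hand side to $(1+t^2)/(1+\tau^2)$, giving $|\hat p(t)|^2 \leq \langle t\rangle^2 |\hat p(\tau)|^2/\langle \tau\rangle^2$. Since the Sobolev weight $(1+k^2+\xi^2)^{N/2}$ is time-independent, this survives the Fourier sum to yield $\|p(t)\|_{H^N} \leq \langle t\rangle \|p(\tau)\|_{H^N}$, which in particular implies the claimed bound by $\langle t\rangle^2$.

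For the lower bounds \eqref{eq:isolow}, I would exploit the integrating factor $1/\sqrt{1+t^2}$ in the $\hat p_2$-equation to obtain the key identity
$$\hat p_2(t) = \sqrt{1+t^2}\,\bigl[\hat p_2(0) + i\alpha k\int_0^t \hat p_1(s)/\sqrt{1+s^2}\,ds\bigr].$$
The prefactor $\sqrt{1+t^2}$ is precisely the source of the linear-in-$t$ growth, provided the bracketed integral stays bounded away from zero. I would take initial data concentrated on the single Fourier mode $(k,\xi)=(1,0)$ with $(\hat p_1, \hat p_2)(0) = (1,0)$. To leading order (neglecting coupling), the $\hat p_1$-equation has integrating factor $\sqrt{1+t^2}e^{\nu(t+t^3/3)}$, giving $\hat p_1(s) = e^{-\nu(s+s^3/3)}/\sqrt{1+s^2}$ and an $O(1)$ contribution to the integral. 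Including the back-reaction, which in the dissipation-dominated regime $s \gtrsim \nu^{-1/3}$ takes the quasi-static form $\hat p_1 \approx i\alpha\hat p_2/(\nu(1+s^2))$, yields a self-consistent Volterra equation whose asymptotic solution is $\hat p_2(t) \sim Ct$ with $|C|$ bounded below by an explicit constant times $\nu/\alpha^2$.

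The main obstacle will be the quantitative tracking in the last step, where one must carefully balance the early-time (oscillatory) contributions against the late-time back-reaction through the quasi-static balance to extract the specific rate $\nu/(8\alpha^2)$; this amounts to resolving the Volterra equation across the transition $s \sim \nu^{-1/3}$ between the two regimes. Finally, for the $H^{N-1}$ bound, since the chosen initial data is concentrated at a fixed low mode, the ratio $\|p_{in}\|_{H^{N-1}}/\|p_{in}\|_{H^N}$ is a bounded dimensional constant, so the second inequality in \eqref{eq:isolow}—with its additional factor $\nu/\alpha^2$ on the right-hand side—follows from the first estimate in the regime $\nu \lesssim \alpha^2$ where the lemma is non-vacuous.
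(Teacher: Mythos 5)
Your treatment of the upper bound \eqref{eq:isoup} is essentially the paper's argument (drop the antisymmetric coupling and the sign-definite dissipation, Gronwall the $\tfrac{|t|}{1+t^2}$ term), and your integrating-factor identity for $\hat p_2$ is the same Duhamel formula the paper uses. The genuine gap is in the choice of initial data for the lower bound \eqref{eq:isolow}, and it is fatal. The paper does \emph{not} take data at a low mode: it takes $p_{1,in}=0$ and $p_{2,in}$ supported at $k=-1$ and \emph{high} vertical frequency $\xi\in[2\alpha^2/\nu,\,4\alpha^2/\nu]$. At such frequencies the viscous factor on $p_1$ is $\nu(1+(t+\xi)^2)\ge\nu\xi^2\ge 4\alpha^4/\nu\gg\alpha$ from time zero, so the coupling really is perturbative: the paper's double Duhamel iteration bounds the total correction to $\hat p_2$ by $\tfrac{\alpha^2}{\nu\xi}\le\tfrac12\sup|\hat p_2|$, whence $\hat p_2(t)\ge\tfrac12\sqrt{\tfrac{1+(t+\xi)^2}{1+\xi^2}}\,p_{2,in}\ge\tfrac{t}{2\xi}p_{2,in}\ge\tfrac{\nu t}{8\alpha^2}p_{2,in}$. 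The growth rate $t/\xi$ and the smallness of the back-reaction are in direct tension, and $\xi\sim\alpha^2/\nu$ is exactly the balance point; this also explains the $H^{N-1}$ statement, which is obtained by paying one factor of $\langle\xi\rangle^{-1}\sim\nu/(4\alpha^2)$, not by your "bounded dimensional constant" argument (which in any case only gives $\|p(t)\|_{H^{N-1}}\le\|p(t)\|_{H^N}$, the wrong direction, unless one uses that single-mode data keeps the two norms proportional).

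With your data at $(k,\xi)=(1,0)$ and $(\hat p_1,\hat p_2)(0)=(1,0)$, the bracketed integral does \emph{not} stay bounded away from zero, and your "main obstacle" cannot be resolved. Indeed the bracket equals $\hat p_2(t)/\sqrt{1+t^2}$, which for $t\ll\sqrt{\alpha/\nu}$ oscillates at frequency $\alpha$ while the energy $|\hat p_1|^2+|\hat p_2|^2$ is damped at the equipartitioned rate $\sim\nu(1+t^2)$; and in the subsequent quasi-static regime $\hat p_1\approx i\alpha\hat p_2/(\nu(1+t^2))$ the effective equation $\dt\hat p_2=\bigl(\tfrac{t}{1+t^2}-\tfrac{\alpha^2}{\nu(1+t^2)}\bigr)\hat p_2$ is \emph{decaying} until $t\sim\alpha^2/\nu$. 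Summing these contributions, $|\hat p_2|$ is reduced by a factor of order $e^{-c\,\alpha^{3/2}\nu^{-1/2}}$ before the linear growth can begin, so the eventual growth has an exponentially small prefactor rather than $\nu/(8\alpha^2)$. The quantitative constant is not a technicality here: it is used downstream in the nonlinear norm-inflation argument to pick the inflation time $T\sim C\alpha^4/\nu^2$. So the missing idea is precisely the high-frequency localization of the data (together with putting the data entirely in $p_2$), which turns the coupling into a uniformly small perturbation instead of a long transient that destroys the seed.
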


\begin{proof}[Proof of Lemma \ref{lemma:linIns}]
After a Fourier transform \eqref{eq:liso} yields 
\begin{align}
\begin{split}
     \partial_t p_1(k)  &= -\tfrac {t-\frac \xi k } {1+(t-\frac \xi k )^2} p_1(k) + \alpha  k p_2 (k) -\nu ( k^2 +  (\xi-kt)^2) p_1(k),  \\
      \partial_t p_2 (k)  &=\tfrac {t-\frac \xi k } {1+(t-\frac \xi k )^2} p_1(k)  - \alpha k p_1 (k).\label{eq_p_FT}
\end{split}
\end{align}

We assume that $p_1(0,k,\xi)=0$ and consider variables $k=-1$ and $\xi\ge 2\tfrac {\alpha^2}\nu  $
\begin{align*}
    p_1&= -\alpha  \int^t_0 d\tau \sqrt{\tfrac {1+(\tau_1+\xi )^2}{1+(t+\xi )^2}}\exp(-\nu (t-\tau+\tfrac 1 3 ((t+\xi  )^3-(\tau_1+\xi  )^3)))p_2(\tau_1) 
\end{align*}
thus we can estimate $p_2$ by
\begin{align*}
    &\quad p_2-\sqrt {\tfrac {1+(t+\xi )^2 } {1+\xi^2 }}p_{2,in}(k)\\
    &= -\alpha k \int_0^t d\tau_2\sqrt{\tfrac {1+(t+\xi )^2 } {1+(\tau_2 +\xi )^2 }} p_1(\tau_2,-1) \\
    &= -\alpha^2  \int_0^t d\tau_2 \int^{\tau_1}_0 d\tau_1 \tfrac {\sqrt{1+(t+\xi  )^2} \sqrt{1+(\tau_1+\xi  )^2}}{1+(\tau_2+\xi  )^2} p_2(\tau_1) \\
    &\qquad \cdot \exp(-\nu (\tau_2-\tau_1+\tfrac 1 3 ((\tau_2+\xi  )^3-(\tau_1+\xi )^3)))\\
    &\le {\alpha^2}\vert p_2\vert_\infty \int_0^t d\tau_1 \int^{t}_{\tau_2 } d\tau_2  \exp(-\nu (\tau_2-\tau_1+\tfrac 1 3 ((\tau_2+\xi  )^3-(\tau_1+\xi )^3))).
\end{align*}

We estimate the last integral by
\begin{align*}
    \int_0^t d\tau_1& \int^{t}_{\tau_2 } d\tau_2  \exp(-\nu (\tau_2-\tau_1+\tfrac 1 3 ((\tau_2+\xi  )^3-(\tau_1+\xi )^3)))\\
    &= \int_0^t d\tau_1 \int^{t}_{\tau_2 } d\tau_2 \tfrac {1+(\tau_2+\xi)^2}{1+(\tau_2+\xi)^2}  \exp(-\nu (\tau_2-\tau_1+\tfrac 1 3 ((\tau_2+\xi  )^3-(\tau_1+\xi )^3)))\\
    &\le  \int_0^t d\tau_1 \int^{t}_{\tau_2 } d\tau_2 \tfrac {1+(\tau_2+\xi)^2}{1+(\tau_1+\xi)^2}  \exp(-\nu (\tau_2-\tau_1+\tfrac 1 3 ((\tau_2+\xi  )^3-(\tau_1+\xi )^3)))\\
    &\le \tfrac 1\nu \int_0^t d\tau_1  \tfrac 1{1+(\tau_1+\xi)^2}[\exp(-\nu (\tau_2-\tau_1+\tfrac 1 3 ((\tau_2+\xi  )^3-(\tau_1+\xi )^3)))]_{\tau_2=\tau_1}^{\tau_2 =t }\\
    &\le \tfrac 1{\nu \xi } 
\end{align*}
and thus 
\begin{align*}
    \vert p_2-\sqrt {\tfrac {1+(t+\xi )^2 } {1+\xi^2 }}p_{2,in}(k)\vert &\le \tfrac {\alpha^2 }{\nu \xi } \vert p_2\vert_\infty.
\end{align*}
Since $\xi\ge 2\tfrac {\alpha^2}\nu  $ we obtain 
\begin{align*}
    p_2(-1) \ge \tfrac 1 2 \sqrt {\tfrac {1+(t+\xi )^2 } {1+\xi^2 }}p_{2,in}(-1)\ge \tfrac t{2\xi}p_{2,in}(-1).
\end{align*}
Let $a(\xi ) $ be such that $\supp_\xi (a (\xi)) \subset [2\tfrac {\alpha^2}\nu,4\tfrac {\alpha^2}\nu]$ and $\int {(2+\xi^2)^{\frac N 2 }} a^2(\xi)=1$ then we deduce that for the initial data 
\begin{align*}
    p_{in} (k, \xi ) &= \textbf{1}_{k=-1} a(\xi )
\end{align*}
it holds that 
\begin{align*}
    \Vert p_{in} \Vert_{H^N}&=1,\\
    \Vert p(t)\Vert_{H^N}&\ge t\tfrac {\nu }{8\alpha^2 },\\
    \Vert p(t)\Vert_{H^{N-1}}&\ge t\tfrac {\nu^2 }{32\alpha^4},
\end{align*}
which proves \eqref{eq:isolow}. Furthermore, for all solutions such that  $p(\tau)\in H^N$ we estimate  
\begin{align*}
    \p_t \vert p\vert^2(k,\xi,t ) &\le2\tfrac {\vert t-\frac \xi k \vert }{1+\vert t-\frac \xi k \vert^2}\vert p \vert^2(k,\xi,t)
\end{align*}
and so 
\begin{align*}
    \vert p\vert^2(k,\xi,t )&\le \exp( 2\int_\tau^t \tfrac {\vert s-\frac \xi k \vert }{1+\vert s -\frac \xi k \vert^2} ds ) \vert p_{in} \vert^2(k,\xi )\\
    &\le\exp( \int_0^{t }  \tfrac { 2\tilde s  }{1+ \tilde s^2 }d\tilde s  ) \vert p_{in} \vert^2(k,\xi )\\
    &\le \langle t\rangle^4  \vert p_{in} \vert^2(k,\xi )
\end{align*}
which proves \eqref{eq:isoup}.
\end{proof}

\subsection{Nonlinear norm inflation}

We next consider the nonlinear non-resistive MHD equations in their perturbative
form around the stationary solution \eqref{eq:Couette}:
\begin{align}
  \label{NLiso}
\begin{split}
     \partial_t p_1 - \partial_x \partial_x^t \Delta^{-1}_t p_1- \alpha \partial_x p_2 &= \nu \Delta_t p_1 + \nabla^\perp_t\Lambda^{-1}_t (b\nabla_t b - v\nabla_t v ),\\
      \partial_t p_2 +\partial_x \partial_x^t \Delta^{-1}_t p_2 - \alpha \partial_x p_1 &=  \nabla^\perp_t\Lambda^{-1}_t (b\nabla_t v - v\nabla_t b ).
\end{split}
\end{align}
The following lemma establishes the norm inflation result of Proposition \ref{prop:instability}.
\begin{lemma}[Nonlinear norm inflation for the non-resistive MHD equations]
  Consider the non-resistive nonlinear MHD equations \eqref{NLiso}. Then for all $C=C(\nu ) >1 $ there exists $\eps_0>0$ such that for all $0<\eps<\eps_0 $ there exists initial data $p_{in} $ such that 
    \begin{align*}
        \Vert p_{in} \Vert_{H^N } =\eps
    \end{align*}
    and 
    \begin{align*}
        \Vert p\Vert_{L^\infty H^N } \ge \eps   C.
    \end{align*}
\end{lemma}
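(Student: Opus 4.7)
The plan is to run a perturbative argument around the linear instability of Lemma \ref{lemma:linIns} on a time window whose length depends only on $\nu$ (and the desired growth factor $C$), so that nonlinear corrections are provably of lower order in $\epsilon$.

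First, I would fix the growth factor $C>1$ and set the target time $T=T(\nu,C)$ large enough that the linear lower bound \eqref{eq:isolow} gives $T\tfrac{\nu}{8\alpha^2}\ge 2C$, e.g.\ $T=16C\alpha^2/\nu$. Then I take as initial data a rescaling $p_{in}=\epsilon\,a$ of the unstable datum constructed in Lemma \ref{lemma:linIns}; recall that $a$ is supported in Fourier space in the bounded set $\{k=-1,\ \xi\in[2\alpha^2/\nu,4\alpha^2/\nu]\}$, so that $\|a\|_{H^N}=1$ and, crucially, $\|a\|_{H^{N+2}}\le K(\nu)$ for a constant depending only on $\nu$ and $\alpha$. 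In particular $\|p_{in}\|_{H^{N+2}}\lesssim_\nu \epsilon$.

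Next, I would invoke local well-posedness of \eqref{NLiso} in $H^{N+2}$ (available since $N\ge 3$, the viscous equations admit standard energy estimates, and the nonlinearity is a bilinear form of order one). Because the $H^{N+2}$ norm of the datum is of size $\epsilon$ with $\nu$-dependent constants, one obtains existence on $[0,T]$ and a uniform bound $\|p\|_{L^\infty_T H^{N+2}}\lesssim_\nu \epsilon$, provided $\epsilon$ is small enough depending on $\nu$ and $C$. I would then split $p=p^L+q$, where $p^L$ solves the linearization \eqref{eq:liso} with data $p_{in}$ and $q(0)=0$, so that by Duhamel
\begin{align*}
q(t)=\int_0^t S(t-s)\,\mathcal{N}(p(s))\,ds,
\end{align*}
with $S(\cdot)$ the linear solution operator and $\mathcal{N}$ the nonlinear terms on the right-hand side of \eqref{NLiso}. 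The linear upper bound \eqref{eq:isoup} yields $\|S(t-s)\|_{H^N\to H^N}\lesssim \langle t-s\rangle^2$, while a standard tame estimate gives $\|\mathcal{N}(p)\|_{H^N}\lesssim \|p\|_{H^{N+1}}^2\lesssim_\nu \epsilon^2$. Combining these on $[0,T]$ produces $\|q\|_{L^\infty_T H^N}\lesssim_{\nu,C}\epsilon^2$.

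Finally, Lemma \ref{lemma:linIns} guarantees $\|p^L(T)\|_{H^N}\ge T\tfrac{\nu}{8\alpha^2}\epsilon\ge 2C\epsilon$, so
\begin{align*}
\|p(T)\|_{H^N}\ge \|p^L(T)\|_{H^N}-\|q(T)\|_{H^N}\ge 2C\epsilon - O_{\nu,C}(\epsilon^2)\ge C\epsilon
\end{align*}
once $\epsilon<\epsilon_0(\nu,C)$ is small. I expect the main obstacle to be the derivative loss in $\mathcal{N}$: a naive Gronwall bound on the full $p$ would need control of $\|p\|_{H^{N+1}}$, which in turn leaks into $H^{N+2}$. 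This is handled cleanly by exploiting that our specific datum is frequency-localized in a ball of radius $O(\alpha^2/\nu)$, making all Sobolev norms of $p_{in}$ comparable up to $\nu$-dependent constants, and by working on the fixed finite time horizon $T(\nu,C)$ where local well-posedness in $H^{N+2}$ furnishes the required a priori control at the cost of constants that may freely depend on $\nu$ and $C$.
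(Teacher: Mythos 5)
Your plan is correct in substance, but it takes a genuinely different route from the paper. The paper argues by contradiction: it assumes $\Vert p\Vert_{L^\infty H^N}\le C\eps$ for all small data, uses this assumed bound as the a priori control of the nonlinearity, and measures the inflation one derivative \emph{lower}, in $H^{N-1}$ (this is why Lemma \ref{lemma:linIns} also records the $H^{N-1}$ lower bound with rate $\nu^2/(32\alpha^4)$): the derivative lost in $NL[p]$ is absorbed by the gap between $H^{N-1}$ and the hypothesized $H^N$ bound, and the resulting lower bound $\Vert p\Vert_{H^{N-1}}\ge C\eps$ at a suitable time contradicts the assumption. You instead argue directly and close the derivative gap from \emph{above}: the frequency-localized datum gives $\Vert p_{in}\Vert_{H^{N+2}}\lesssim_{\nu}\eps$, an a priori bound $\Vert p\Vert_{L^\infty_T H^{N+2}}\lesssim_{\nu,C}\eps$ on the fixed horizon $T(\nu,C)$ controls the nonlinearity, and Duhamel with the propagator bound \eqref{eq:isoup} bounds $q=p-p^L$ in $H^N$ by $O_{\nu,C}(\eps^2)$, so the linear growth of \eqref{eq:isolow} survives in $H^N$ itself. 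Both arguments handle the loss of one derivative by a one-norm gap; the paper's contradiction device obtains the a priori control for free and needs no nonlinear theory beyond $H^N$, while your version is logically more direct and yields inflation in $H^N$ rather than $H^{N-1}$, at the price of invoking higher-norm well-posedness. Two points in your plan need care: (i) the uniform $H^{N+2}$ bound on $[0,T]$ does not follow from the frequency localization of the datum (localization is not propagated by the flow); it requires genuine energy estimates for the non-resistive system, which avoid derivative loss only via the divergence-free transport structure, together with the remark that the linear terms cause at most polynomial-in-$T$ growth, harmless since $T=T(\nu,C)$; (ii) the linearized problem is non-autonomous, so the solution operator is $S(s,t)$ rather than $S(t-s)$, and $\Vert \mathcal{N}(p)\Vert_{H^N}$ carries an extra factor $\langle t\rangle$ from $\nabla_t$ — both are absorbed into $(\nu,C)$-dependent constants on $[0,T]$, so neither affects the conclusion.
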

\begin{proof}
For the sake of contradiction we assume that there exists $\eps_0>0$ such that for all $0<\eps\le \eps_0 $ and for any choice of initial data with
$\|p_{in}\|_{H^N}=\epsilon$ it holds that 
\begin{align*}
        \Vert p\Vert_{L^\infty H^N } \le \eps  C.
\end{align*}
Our plan is to choose initial data such that for a choice of $\eps$ and $t$ we obtain a contradiction to this bound. In particular, we choose $p_{in}$ as the data of the linear instability
result, Lemma \ref{lemma:linIns}, such that the associated linear solution
$p_{lin}$ satisfies
\begin{align*}
    \Vert p_{in} \Vert_{H^N } &= \eps, \\
    \Vert p_{lin} (t)\Vert_{H^{N-1} } &\ge t \tfrac {\nu^2}{32 \alpha^4}.
\end{align*}

Let $S(\tau, t)$ be the solution operator for the linearized system. Then in view of \eqref{eq:isoup} we have the estimate
\begin{align}
    \Vert S(\tau, t)\Vert_{H^N\to H^N }\le  {\langle t\rangle^2 }.
\end{align}
Thus we deduce that
\begin{align*}
     \partial_t ( p-p_{lin})&\le L( p-p_{lin}) + NL[p]
\end{align*}
and therefore
\begin{align*}
    \Vert p-p_{lin}\Vert_{H^{N-1}}^2
    &\le \int_0^\tau \Vert S(\tau, t)\Vert_{H^N\to H^N } \Vert p-p_{lin}\Vert_{ H^{N-1 }}\Vert p\Vert_{ H^{N-1 }}\Vert   \nabla_t p\Vert_{H^{N-1}}\\
    &\lesssim \Vert p-p_{lin}\Vert_{ L^\infty H^{N-1 }}\Vert p\Vert_{L^\infty H^{N-1 }}\Vert   p\Vert_{L^\infty H^{N}} 2 \int^t_0 t \langle t \rangle^2 \\
    &\lesssim t^2\langle t\rangle^2 \eps^2 C^2 \Vert p-p_{lin}\Vert_{ H^{N-1 }}. 
\end{align*}
Finally, we obtain 
\begin{align*}
    \Vert p\Vert_{H^{N-1} }&\ge \Vert p_{lin} \Vert_{H^{N-1} }-t^2\langle t\rangle^2 \eps^2 C^2\\
    &= t \eps (  \tfrac {\nu^2 }{32\alpha^4 } - t\langle t\rangle^2 \eps C^2).
\end{align*}
This completes our proof by contradiction provided this term is large enough for a given small $\eps$ and suitable time. Indeed for the choice $\eps \le     \tfrac {\nu^8} {10^{8}C^5 \alpha^{16}} $ we obtain that at the time $t= 10^2 C  \tfrac {\alpha^4} {\nu^2 } $ it  holds that
\begin{align*}
    \Vert p\Vert_{H^{N-1} }&\ge  t  \tfrac {\nu^2 }{10^3\alpha^4 }\eps\ge C \eps .
\end{align*}
This concludes our proof of the nonlinear norm inflation and hence completes our proof of Proposition \ref{prop:instability}.
\end{proof}
The behavior of the MHD equations and, in particular, the interaction of shear flows, the magnetic field and dissipation are an area of current active research \cite{liss2020sobolev,Dolce,zhao2023asymptotic,knobel2023echoes}. 
However, prior works have focused on cases where the resistivity is at least as strong as the fluid viscosity and where thus the behavior is closely related to that of the Navier-Stokes equations.
In contrast, the non-resistive MHD equations exhibit additional instability, as for instance shown in Proposition \ref{prop:instability}.

Motivated by this dichotomy, in this article we have studied the anisotropic, partial dissipation regime
\begin{align*}
    \kappa_y=0, \ \kappa_x=\nu_x=\nu_y
\end{align*}
and the associated stability threshold in the inviscid limit.
As shown in Theorem \ref{thm:anisoThres} and highlighted in the estimates of Sections \ref{linstab}, \ref{hfa} and \ref{hfw}, this partial dissipation regime behaves qualitatively differently than both the fully dissipative case and the non-resistive case.
Moreover, our analysis crucially used the coupling of the velocity field and magnetic field induced by the underlying magnetic field, which allowed us to obtain improved estimates for the magnetic field despite the lack of the symmetry of the dissipation. 

Partial, anisotropic dissipation in the MHD equations is thus shown to give rise to distinct regimes with different (in)stability properties and demonstrates an intricate interplay of shear dynamics, magnetic interaction and anisotropic dissipation. A more complete understanding of all these regimes, the case of resistivity vanishing faster than viscosity and a characterization of the (in)stability properties of the ideal MHD equations remain exciting questions for future research.

\subsection*{Acknowledgments}
Funded by the Deutsche Forschungsgemeinschaft (DFG, German Research Foundation) – Project-ID 258734477 – SFB 1173
\appendix

\bibliography{library}
\bibliographystyle{alpha}

\end{document}